\renewcommand{\ran}{\mathrm{Ran}}	
\renewcommand{\Ran}{\mathrm{Ran}}	
\renewcommand{\ker}{\mathrm{Ker}}
\newcommand{\Ker}{\mathrm{Ker}}
\renewcommand{\Dom}{D}
\renewcommand{\gw}{s}
\newcommand{\Sg}[1][T]{(#1(t))_{t\geq 0}}
\newcommand{\RR}{\mathbb{R}}
\newcommand{\CC}{\mathbb{C}}
\newcommand{\ZZ}{\mathbb{Z}}
\renewcommand{\Dom}{D}
\newtheorem{thm}{Theorem}[section]
\newtheorem{prp}[thm]{Proposition}
\newtheorem{lem}[thm]{Lemma}
\theoremstyle{definition}
\newtheorem{rem}[thm]{Remark}
\numberwithin{equation}{section}
\newcommand{\snorm}[1]{|\hspace{-.012in}|\hspace{-.012in}|#1|\hspace{-.012in}|\hspace{-.012in}|}
\renewcommand{\v}{u}
\newcommand{\vv}{v}
\newcommand{\f}{w}
\newcommand{\vs}{f}
\newcommand{\vvs}{g}
\newcommand{\fs}{h}
\newcommand{\sql}{\sqrt{\gl}}
\newcommand{\sqlwi}{\sqrt{is}}
\newcommand{\cv}[1][\gl]{a(#1)}
\newcommand{\cf}[1][\gl]{b(#1)}
\newcommand{\Ivs}[2][s]{U_{#1}(#2)}
\newcommand{\Ivc}[2][s]{U'_{#1}(#2)}
\newcommand{\Ifs}[2][s]{W_{#1}(#2)}
\newcommand{\Ifc}[2][s]{W'_{#1}(#2)}
\newcommand{\sprod}[2]{\left(#1\,\middle|\,#2\right)}
\begin{document}

\title[Optimal energy decay in a  coupled wave-heat system]{Optimal energy decay in a one-dimensional coupled wave-heat system}

\author{Charles Batty}
\address{St John's College, St Giles, Oxford\;\;OX1 3JP, United Kingdom}
\email{charles.batty@sjc.ox.ac.uk}

\author{Lassi Paunonen}
\address{Department of Mathematics, Tampere University of Technology, P.O.\ Box 553, 33101 Tampere, Finland}
\email{lassi.paunonen@tut.fi}

\author{David Seifert}
\address{St John's College, St Giles, Oxford\;\;OX1 3JP, United Kingdom}
\email{david.seifert@sjc.ox.ac.uk}

\begin{abstract}
We study a simple one-dimensional coupled wave-heat system and obtain a sharp estimate for the rate of energy decay of classical solutions. Our approach is based on the asymptotic theory of $C_0$-semigroups and in particular on a result due to Borichev and Tomilov \cite{BT10}, which reduces the problem of estimating the rate of energy decay to finding a growth bound for the resolvent of the semigroup generator. This technique not only leads to an optimal result, it is also simpler than the methods used by other authors in similar situations.
\end{abstract}

\thanks{This work was carried out while the second author visited Oxford in March and April 2015. The visit was supported by the EPSRC grant  EP/J010723/1 held by the first author and Professor Y.\ Tomilov (Warsaw). The second author would also like to thank the  Vilho, Yrj\"o and Kalle V\"ais\"al\"a Foundation for financial support.}
\subjclass[2010]{35M33, 35B40, 47D06 (34K30, 37A25).}
\keywords{Wave equation, heat equation, coupled, energy, rates of decay, $C_0$-semigroups, resolvent estimates.}

\maketitle

\section{Introduction}
\label{sec:Intro}

We study the following one-dimensional coupled wave-heat system:
\eqn{
\label{eq:problem}
\left\{
\begin{array}{ll@\qquad r}
  \multicolumn{2}{l}{\v_{tt}(\xi,t) =\v_{\xi\xi}(\xi,t),} 
  & \xi\in(-1,0), ~t>0, \\[1ex]
  \multicolumn{2}{l}{\f_{t}(\xi,t) = \f_{\xi\xi}(\xi,t),} &   \xi\in(0,1), ~t>0,\\[1.5ex]
    \v_\xi(-1,t)=0, & \f(1,t)=0, & t>0,\\[1ex]
    \v_t(0,t)=\f(0,t), & \v_\xi(0,t)=\f_\xi(0,t), & t>0,\\[1.5ex]
    \v(\xi,0)=\v(\xi), & \v_t(\xi,0)=v(\xi), &  \xi\in(-1,0),
  \\[1ex]
  \multicolumn{2}{l}{\f(\xi,0)=\f(\xi),} & \xi\in(0,1),
\end{array}
\right.
}
where the initial data satisfy $u\in H^1(-1,0)$, $v\in L^2(-1,0)$ and $w\in L^2(0,1)$. Models of this type have received considerable attention in recent years, especially in the higher-dimensional setting. The main motivation comes from the study of so-called fluid-structure models, in which the non-linear elasticity equation is coupled with the Navier-Stokes equations; see for instance the survey article \cite{AvaTri07a}. The above model can be viewed as a simple linearised version of such a model which however preserves the feature of a hyperbolic equation being coupled with a parabolic equation. 

The primary aim in such models is to obtain quantitative estimates for the rate of energy decay of a given solution. Given a vector $x=(u,v,w)$ of initial data as above, the energy of the solution corresponding to these initial data is defined as 
$$E_x(t)=\frac{1}{2}\int_{-1}^1|u_\xi(\xi,t)|^2+|u_t(\xi,t)|^2+|w(\xi,t)|^2\, d\xi,\quad t\ge0,$$
where all functions have been extended by zero in $\xi$ to the interval $(-1,1)$.  A simple calculation shows that
$$E_x'(t)=-\int_0^1|w_\xi(\xi,t)|^2\,d\xi,\quad t\ge0,$$
provided the solution is sufficiently regular, and hence the energy of any such solution is non-increasing in time. The aim of this paper is to establish an optimal estimate on the rate at which this decay occurs.

A higher-dimensional version of the above problem was studied in \cite{ZhaZua07}, showing that $E_x(t)=O(t^{-1/3})$ as $t\to\infty$, and this rate was improved in \cite{Duy07} to $t^{-2+\delta}$ for any $\delta>0$. 
A rate of $t^{-1}$ is established for a closely related model in \cite{AvaTri13}, where the authors also comment on the optimality of the decay rates for the higher-dimensional systems; see \cite[Rem.\ 1.3]{AvaTri13}.
Further related results may be found 
in~\cite{Ava07,
AvaDvo08,
RauZha05,
VazZua03}.
As the main result of this paper we show that the energy of any classical solution to \eqref{eq:problem} 
satisfies 
$$E_x(t)=o\big(t^{-4}\big),\quad t\to\infty.$$
Moreover, we prove that this rate is optimal. A similar result was obtained in \cite{ZhaZua04} for a similar one-dimensional problem in which the wave part satisfies a Dirichlet boundary condition rather than a Neumann boundary condition at 
 $\xi=-1$. Our method is based on the semigroup approach used in all of the aforementioned works; see also \cite{AvaTri07, AvaTri09}. However, rather than estimating the norm of the semigroup directly we follow \cite{AvaTri13} in using a recent result in the abstract theory of $C_0$-semigroups obtained by Borichev and Tomilov in \cite{BT10}, which makes it possible to deduce sharp rates of energy decay from appropriate growth bounds on the norm of the resolvent of the generator; see also \cite{ BatChi14, BD08}. This method not only leads to an optimal decay estimate for the problem under consideration, it also leads to a relatively concise argument. In particular, the present method appears to be significantly simpler than the method used in the  Dirichlet case in \cite{ZhaZua04}, which is based on  the theory of Riesz spectral operators and a very detailed spectral analysis.

The paper is set out as follows.  In Section~\ref{sec:sggen} we show that our problem is well-posed in the sense of $C_0$-semigroups and we describe the spectrum of the infinitesimal generator. In Section~\ref{sec:res} we turn to resolvent bounds. We first establish in Theorem~\ref{thm:upperbd} an upper bound for the norm of the resolvent operator along the imaginary axis. This is the most technical part of the paper. We then go on to show in Theorem~\ref{thm:opt} that the upper bound is optimal. In Section~\ref{sec:energy} we apply the Borichev-Tomilov result to deduce from these resolvent bounds an optimal estimate for the rate of energy decay of smooth solutions. Finally, in Section~\ref{sec:Dirichlet} we conclude with some comments on the Dirichlet case addressed in \cite{ZhaZua04}, explaining how the present approach can be adapted to that setting without difficulty.

The notation used is standard throughout. In particular, given a closed operator $A$ on a Banach space $X$, which will always be assumed to be complex, we denote its domain by $D(A)$, its kernel by $\ker(A)$ and its range by $\ran(A)$. The spectrum of $A$ is denoted by $\sigma(A)$, and its resolvent set by $\rho(A)$. Given $\lambda\in\rho(A)$, we write $R(\lambda,A)$ for the resolvent operator $(\lambda-A)^{-1}$. 
Given two functions $f,g:(0,\infty)\to\RR_+$, we write $f(t)=O(g(t))$, $t\to\infty$, to indicate that $f(t)\le Cg(t)$ for some constant $C>0$ and all sufficiently large $t>0$. If $g(t)>0$ for all  $t>0$, we write $f(t)=o(g(t))$, $t\to\infty$, if $f(t)/g(t)\to0$ as $t\to\infty$. The case of real-valued functions defined on the whole of $\RR$ is treated in an analogous way.
For real-valued quantities $p$ and $q$, we occasionally use the notation $p\lesssim q$ to indicate that $p\le Cq$ for some constant $C>0$ which is independent of all the parameters that are free to vary in a given situation.
We let $\CC_-$ denote the open left half-plane $\{\lambda\in\CC:\re\lambda<0\}$.

\section{Well-posedness and  properties of the semigroup}
\label{sec:sggen}

We begin by recasting \eqref{eq:problem} as an abstract Cauchy problem. Consider the two Hilbert spaces $X=H^1(-1,0)\times L^2(-1,0)\times L^2(0,1)$ and  $Y=H^2(-1,0)\times H^1(-1,0)\times H^2(0,1)$, both with their natural norms.
The operator $A$ defined by $Ax=(v,u'',w'')$ for $x=(u,v,w)$ in the domain
$$D(A)=\big\{(u,v,w)\in Y: u'(-1)=w(1)=0, ~v(0)=w(0), ~u'(0)=w'(0)\big\}$$
is closed and densely defined. If we let $z(t)=(u(\cdot,t),u_t(\cdot,t),w(\cdot,t))$ for $t\ge0$, then the coupled wave-heat system \eqref{eq:problem} can be rewritten in the form

\eqn{
\label{eq:CP}
\begin{cases}
z'(t)=Az(t), &t\ge0,\\
z(0)=x,
\end{cases}
}
where $x\in X$. We show below, in Theorem~\ref{thm:sggeneration}, that $A$ generates a uniformly bounded $C_0$-semigroup $\Sg$
on $X$. In particular, the unique solution of \eqref{eq:CP} is given by $z(t)=T(t)x$ for $t\ge0$. This solution in general satisfies \eqref{eq:problem} only in the so-called mild sense, and it is a solution in the classical sense precisely when $x\in D(A)$; see for instance \cite{ABHN11} for details on the theory of $C_0$-semigroups. It will be for such classical solutions that we eventually, in Section~\ref{sec:energy}, obtain a uniform rate of energy decay in the sense described in Section~\ref{sec:Intro}.

\begin{thm}
  \label{thm:sggeneration}
The operator $A$ generates a uniformly bounded $C_0$-semigroup $\Sg$ on $X$. Moreover, the spectrum $\sigma(A)$ of $A$ consists of isolated eigenvalues and satisfies $\sigma(A)\cap i\RR=\{0\}$ and, in fact, 
\eq{
\sigma(A)=\big\{\lambda\in\CC: \sql \cosh(\lambda)\cosh(\sql)+ \sinh(\lambda)\sinh(\sql)=0\big\}.
}
\end{thm}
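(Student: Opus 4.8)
The plan is to derive both assertions from the Lumer--Phillips theorem together with an explicit analysis of the eigenvalue equation $Ax=\lambda x$; it is natural to carry the spectral part and the generation part out together, since some spectral information feeds into the latter. I would work throughout with the \emph{energy form} on $X$ given by
\[
  \iprod{x}{y}_E=\int_{-1}^{0}u'\,\overline{\tilde u'}\,d\xi+\int_{-1}^{0}v\,\overline{\tilde v}\,d\xi+\int_{0}^{1}w\,\overline{\tilde w}\,d\xi,
\]
where $x=(u,v,w)$ and $y=(\tilde u,\tilde v,\tilde w)$. This is only a semi-inner-product: its null space is the one-dimensional subspace $N=\CC(1,0,0)$, which lies in $D(A)$ and satisfies $AN=\{0\}$ (indeed $\ker(A)=N$), and by the Poincar\'e--Wirtinger inequality $\iprod{\cdot}{\cdot}_E$ induces on the quotient $X/N$ a norm equivalent to the quotient norm.

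The first step is the identity $\re\iprod{Ax}{x}_E=-\int_{0}^{1}\abs{w'(\xi)}^2\,d\xi\le0$ for $x\in D(A)$, obtained by integrating by parts twice in the first two summands: the boundary terms at $\xi=-1$ vanish because $u'(-1)=0$, and those at $\xi=0$ cancel against the one coming from the heat term once the transmission conditions $v(0)=w(0)$ and $u'(0)=w'(0)$ are inserted. Thus $A$ is dissipative for $\iprod{\cdot}{\cdot}_E$; in particular it has no eigenvalue with $\re\lambda>0$. Next, for $\lambda>0$ and $(f,g,h)\in X$ the resolvent equation $(\lambda-A)(u,v,w)=(f,g,h)$ becomes, after eliminating $v=\lambda u-f$, the pair of constant-coefficient boundary value problems
\[
  u''-\lambda^2u=-\lambda f-g \quad\text{on }(-1,0),\qquad w''-\lambda w=-h \quad\text{on }(0,1),
\]
subject to $u'(-1)=0$, $w(1)=0$ and the two transmission conditions; these four conditions pin down the four integration constants uniquely, since the homogeneous problem would produce an eigenvalue $\lambda>0$, which is excluded. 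Hence $\lambda-A$ is boundedly invertible and $\rho(A)\ne\emptyset$. Because the graph norm of $A$ is equivalent on $D(A)$ to the norm of $Y=H^2(-1,0)\times H^1(-1,0)\times H^2(0,1)$, which embeds compactly in $X$, the operator $A$ has compact resolvent, so $\sigma(A)$ consists of isolated eigenvalues of finite multiplicity. A short computation gives $\ker(A^2)=\ker(A)=N$, so $0$ is a pole of $R(\cdot,A)$ of order one; the associated Riesz decomposition $X=N\oplus X_0$ has $N,X_0$ closed and $A$-invariant with $0\in\rho(A|_{X_0})$, and on $X_0$ the energy form is a genuine inner product equivalent to $\norm{\cdot}_X$. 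Since $A|_{X_0}$ is dissipative and $\lambda-A|_{X_0}$ is surjective for $\lambda>0$, Lumer--Phillips produces a contraction semigroup on $(X_0,\iprod{\cdot}{\cdot}_E)$; together with $T(t)|_N=I$ this shows that $A$ generates a uniformly bounded $C_0$-semigroup on $X$.

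To compute $\sigma(A)$ explicitly, observe that $Ax=\lambda x$ forces $v=\lambda u$, $u''=\lambda^2u$ on $(-1,0)$ and $w''=\lambda w$ on $(0,1)$; the solutions respecting $u'(-1)=0$ and $w(1)=0$ are, up to scalars, $u(\xi)=\alpha\cosh(\lambda(\xi+1))$ and $w(\xi)=\beta\sinh(\sql(\xi-1))$, the latter to be read through its normalisation $\sinh(\sql(\xi-1))/\sql$, which is an entire function of $\lambda$ (so no branch of $\sql$ has to be fixed and $\lambda=0$ causes no trouble). Imposing the two transmission conditions $\lambda u(0)=w(0)$ and $u'(0)=w'(0)$ gives a homogeneous $2\times2$ system for $(\alpha,\beta)$ whose determinant comes out as $-\lambda\big(\sql\cosh(\lambda)\cosh(\sql)+\sinh(\lambda)\sinh(\sql)\big)$; a nonzero eigenfunction exists precisely when this vanishes, and since $\lambda=0$ is both a zero of the bracketed factor and a genuine eigenvalue (with eigenvector $(1,0,0)$), one arrives at the stated description of $\sigma(A)$. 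Finally, for $\sigma(A)\cap i\RR=\{0\}$ I would argue by dissipativity rather than dissect the transcendental equation on the axis: if $i\tau$ with $\tau\ne0$ were an eigenvalue with eigenfunction $x=(u,v,w)$, then $\norm{x}_E\ne0$ (otherwise $x\in N$ and $i\tau x=Ax=0$ forces $\tau=0$), whence $0=\re(i\tau)\norm{x}_E^2=\re\iprod{Ax}{x}_E=-\int_{0}^{1}\abs{w'}^2$, so $w'\equiv0$ and, as $w(1)=0$, $w\equiv0$; the transmission conditions then give $u(0)=u'(0)=0$, uniqueness for $u''+\tau^2u=0$ yields $u\equiv0$, and $v=i\tau u=0$, contradicting $x\ne0$.

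No step presents a serious obstacle --- the technical heart of the paper lies in the resolvent estimate of \refThm{thm:upperbd} --- but two points deserve care: applying Lumer--Phillips despite the degeneracy of the energy form (handled above by splitting off $N$ via the Riesz projection at $0$; that $0$ is a \emph{simple} pole is genuinely needed, since a higher-order pole would produce an invariant subspace larger than $N$ on which the semigroup grows), and carrying out the determinant computation carefully enough to be sure that clearing the common factor $\lambda$ does not discard the eigenvalue $0$ and that the two ODE solution spaces are genuinely one-dimensional, so that the zeros of $\sql\cosh(\lambda)\cosh(\sql)+\sinh(\lambda)\sinh(\sql)$ are in exact correspondence with the eigenvalues of $A$.
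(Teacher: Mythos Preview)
Your proposal is correct and close in spirit to the paper's argument, but the two routes differ in how the splitting $X=N\oplus X_0$ is obtained and in the order of the generation/boundedness steps. You reach the splitting by checking $\ker(A^2)=\ker(A)$ and invoking the Riesz projection at the isolated eigenvalue $0$; the paper instead constructs an explicit bounded functional $\phi(x)=u(0)+\int_{-1}^{0}v+\int_{0}^{1}(1-\xi)w$ and shows directly that $\Ran(A)=\ker(\phi)$, deducing the norm equivalence on $\Ran(A)$ from $\phi$ rather than from Poincar\'e--Wirtinger. Your route is conceptually cleaner and makes transparent why the simplicity of the pole at $0$ is the crucial point; the paper's is more hands-on and avoids any spectral calculus. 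For generation, the paper first shows that $A-I$ is dissipative in the \emph{original} $X$-inner product, so Lumer--Phillips yields a $C_0$-semigroup on $X$ immediately, and only afterwards uses the decomposition to upgrade to uniform boundedness; you obtain generation and boundedness in one stroke on $(X_0,\iprod{\cdot}{\cdot}_E)$ and then extend trivially across $N$. The paper's two-step order has the minor advantage that existence of the semigroup on $X$ does not depend on any spectral analysis at $0$, but both approaches are of comparable length and yield the same conclusions; your treatment of $\sigma(A)\cap i\RR=\{0\}$ via dissipativity and the eigenvalue determinant matches the paper's Propositions~\ref{prp:kerA} and~\ref{prp:Aeigvals} essentially verbatim.
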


We prove this theorem in a sequence of smaller results.

\begin{prp}
  \label{prp:kerA}
The spectrum $\sigma(A)$ of $A$ consists of isolated eigenvalues and satisfies $\sigma(A)\cap i\RR=\{0\}$. Moreover, $ \ker(A) = \Span \set{(1,0,0)}$.
\end{prp}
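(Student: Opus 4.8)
The plan is to establish the three assertions in turn. The kernel is immediate: if $(u,v,w)\in D(A)$ satisfies $A(u,v,w)=0$ then $v=0$ and $u''=w''=0$, so $u$ and $w$ are affine functions; the condition $u'(-1)=0$ makes $u$ constant, while $w(1)=0$ together with the coupling condition $v(0)=w(0)$, which now reads $w(0)=0$, forces $w\equiv0$, and the remaining condition $u'(0)=w'(0)$ then holds automatically. Hence $\ker(A)=\Span\set{(1,0,0)}$; in particular $0$ is an eigenvalue, so $0\in\sigma(A)$.

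To deduce the structure of $\sigma(A)$ I would show that $A$ has compact resolvent. First, $\rho(A)\neq\emptyset$: solving $(\lambda-A)(u,v,w)=(f,g,h)$ and eliminating $v=\lambda u-f$ turns the equation into the two-point boundary value problem $u''-\lambda^2u=-(g+\lambda f)$ on $(-1,0)$ and $w''-\lambda w=-h$ on $(0,1)$, with $u'(-1)=0$, $w(1)=0$ and the two coupling conditions; writing the homogeneous solutions through $\cosh$ and $\sinh$ and imposing these conditions, one checks that the resulting linear system is uniquely solvable for all sufficiently large real $\lambda$, so such $\lambda$ lie in $\rho(A)$. Since $R(\lambda,A)$ then maps $X$ into $D(A)\subseteq Y$ and $Y$ embeds compactly into $X$, the operator $R(\lambda,A)$ is compact, and hence $\sigma(A)$ consists of isolated eigenvalues of finite algebraic multiplicity.

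It remains to show $\sigma(A)\cap i\RR=\{0\}$, and since $0\in\sigma(A)$ I only need to exclude $is$ with real $s\neq0$. Compactness of the resolvent makes any such point an eigenvalue, so suppose $(u,v,w)\in D(A)\setminus\{0\}$ with $v=is\,u$, $u''=-s^2u$ on $(-1,0)$ and $w''=is\,w$ on $(0,1)$. Multiplying the $w$- and $u$-equations by $\overline w$ and $\overline u$ respectively and integrating by parts, using $w(1)=0$ and $u'(-1)=0$, gives
$$-w'(0)\overline{w(0)}-\int_0^1|w'|^2=is\int_0^1|w|^2,\qquad u'(0)\overline{u(0)}-\int_{-1}^0|u'|^2=-s^2\int_{-1}^0|u|^2.$$
Feeding the coupling conditions $w(0)=is\,u(0)$ and $w'(0)=u'(0)$ into the first identity expresses $u'(0)\overline{u(0)}$ through the heat integrals; substituting that into the second identity and taking imaginary parts (this is where $s\neq0$ is used) yields $\int_0^1|w'|^2=0$. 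Thus $w$ is constant, so $w\equiv0$ by $w(1)=0$; the coupling conditions then force $u(0)=u'(0)=0$, whence $u\equiv0$ by uniqueness for the equation $u''=-s^2u$, and finally $v=is\,u=0$, contradicting $(u,v,w)\neq0$.

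The step I expect to require the most care is the last one: the two integrated identities must be combined through the coupling conditions in exactly the right way so that their imaginary part collapses to the single term $\int_0^1|w'|^2$, which is the algebraic trace of the fact that dissipation enters the system only through the heat component. Placing $\rho(A)\neq\emptyset$ on a fully rigorous footing --- whether via the boundary-value computation above or, alternatively, by equipping $X$ with a suitable equivalent inner product, verifying that $A$ is dissipative, and showing $\lambda-A$ is surjective for one $\lambda>0$ --- is the other point needing a little attention, though it is routine.
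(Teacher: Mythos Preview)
Your proof is correct and follows essentially the same route as the paper: compact resolvent via the embedding $Y\hookrightarrow X$, then for $s\neq 0$ an energy/dissipativity computation forcing $\|w'\|_{L^2}=0$, and finally the wave ODE with vanishing Cauchy data at $\xi=0$ to kill $u$. The only cosmetic difference is that the paper packages your two integrated identities into the single line $\re\sprod{(is-A)x}{x}=\|w'\|_{L^2}^2$ using the energy semi-inner product $\sprod{x}{y}=\langle u',f'\rangle+\langle v,g\rangle+\langle w,h\rangle$, which spares the bookkeeping of recombining the boundary terms through the coupling conditions.
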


\begin{proof}

By the Rellich-Kondrachov theorem $Y$ embeds compactly into $X.$ Since the resolvent operator $R(\lambda,A)$, for any $\gl\in\rho(A)$, maps $X$ isomorphically onto $D(A)$ endowed with the graph norm and this space embeds continuously into $Y$, $A$ has compact resolvent. In particular, the spectrum of $R(\gl,A)$ consists only of eigenvalues of finite multiplicity whose only possible accumulation point is the origin. By the spectral mapping theorem for the point spectrum, $\sigma(A)$ consists only of eigenvalues of finite multiplicity whose only possible accumulation point is at infinity.

  For elements $x=(u,v,w)$ and $y=(f,g,h)$ of $X$, let
\eq{
  \sprod{x}{y}
  := \iprod{\v'}{\vs'}_{\Lp[2]} + \iprod{\vv}{\vvs}_{\Lp[2]} + \iprod{\f}{\fs}_{\Lp[2]} .
  }
  Suppose that $s\in \RR$ with 
  $s\neq 0$ and that $x=(\v,\vv,\f)\in \Ker(is-A)$, i.e.\ $x\in D(A)$ and 
  $(is-A)x=0$. 
A routine calculation using integration by parts gives 
    \eq{
   0= \re \sprod{(is-A)x}{x}= \re \sprod{-Ax}{x} =\norm{\f'}_{\Lp[2]}^2.
  }
  Since $\f\in H^2(0,1)$ and  $\f(1)=0$ this implies that $\f= 0$.
  The equation $(is-A)x=0$ now reduces to
  \begin{subequations}
\label{eq:KerBVPs}
\eqn{
\label{eq:KerDE1}
\hspace{2cm} \v''(\xi)&=-s^2\v(\xi), \hspace{-0.2cm} & \hspace{-2.5cm} \xi\in(-1,0), \hspace{1.5cm}\\
\hspace{2cm}     \vv(\xi)&=is u(\xi), \hspace{-0.2cm}  & \hspace{-2.5cm} \xi\in(-1,0), \hspace{1.5cm}\\
& \hspace{-.2cm}  \v'(-1)=u'(0)=v(0)=0.
}
\end{subequations}
      The general solution of \eqref{eq:KerDE1} with the boundary condition $\v'(-1)=0$ takes the form  
      $$\v(\xi) = \cv[s] \cos(s(\xi+1)),\quad \xi\in[-1,0],$$
      where $\cv[s]\in\C$, and the  boundary conditions at $\xi=0$ give 
      $$\cv[s] \sin(s)=is\, \cv[s] \cos(s)=0.$$
       Since $s\neq 0$ these two equations imply that $\cv[s] =0$. Thus the solution of \eqref{eq:KerBVPs} is $u=v=0$, and hence $x=0$. It follows that $\ker(is-A)=\{0\}$ for $s\ne0$, so that $\gs_p(A)\cap i\R\subset \set{0}$.
  
It remains to consider the case $s=0$. Certainly $(c,0,0)\in \ker(A)$ for any $c\in\CC$.
On the other hand, if $x=(\v,\vv,\f)\in \ker(A)$, then  $\f=0$ as above and
\begin{eqnarray*}
  &
  \begin{aligned}
    \v''(\xi)&=0, \qquad &\xi\in(-1,0),\\
    \vv(\xi)&=0,  &\xi\in(-1,0),
  \end{aligned}
  \\
  &
  \v'(-1)=
  \vv(0)=
  \v'(0)=0,
\end{eqnarray*} 
      whose solutions are of the form $u=c$, for some constant $c\in \C$, and $v=0$. Thus $0\in \gs_p(A)$ and $ \ker(A) = \Span \set{(1,0,0)}$, as required.
\end{proof}

\begin{prp}
  \label{prp:Aeigvals}
The eigenvalues of $A$ are precisely those points $\lambda\in\CC$ which satisfy
\eqn{
\label{eq:det=0}
 \sql \cosh(\gl)\cosh(\sql) +  \sinh(\gl)\sinh(\sql)  =0.
}
\end{prp}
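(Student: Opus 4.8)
The plan is to compute the eigenvalue equation directly by solving the boundary value problem $(\lambda - A)x = 0$ for $\lambda \in \CC$, building on the structure already used in the proof of Proposition~\ref{prp:kerA}. Fix $\lambda \neq 0$ (the case $\lambda = 0$ is covered by Proposition~\ref{prp:kerA}, and one checks that $\lambda=0$ indeed satisfies \eqref{eq:det=0} since $\sqrt{0}\cosh(0)\cosh(0) + \sinh(0)\sinh(0) = 0$). For $x = (u,v,w) \in D(A)$, the equation $Ax = \lambda x$ becomes the coupled system $v = \lambda u$, $u'' = \lambda v = \lambda^2 u$ on $(-1,0)$, and $w'' = \lambda w$ on $(0,1)$, together with the boundary and transmission conditions $u'(-1) = 0$, $w(1) = 0$, $v(0) = w(0)$, $u'(0) = w'(0)$.

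First I would solve the two ODEs. The wave-part equation $u'' = \lambda^2 u$ with $u'(-1) = 0$ forces $u(\xi) = p\,\cosh(\lambda(\xi+1))$ for some constant $p$, using the principal branch $\sqrt{\lambda}$ only where convenient; in fact writing it directly in terms of $\lambda$ avoids branch issues here since $\cosh$ is even. The heat-part equation $w'' = \lambda w$ with $w(1) = 0$ forces $w(\xi) = q\,\sinh(\sqrt{\lambda}(\xi-1))$ for some constant $q$, where now the choice of branch of $\sqrt{\lambda}$ is immaterial because $\sinh$ is odd and $\cosh$ is even, so the pair $\{w, w'\}$ evaluated anywhere is branch-independent. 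Then I would impose the two transmission conditions at $\xi = 0$: the condition $v(0) = \lambda u(0) = w(0)$ gives $\lambda p \cosh(\lambda) = -q\sinh(\sqrt{\lambda})$, and the condition $u'(0) = w'(0)$ gives $\lambda p \sinh(\lambda) = q\sqrt{\lambda}\cosh(\sqrt{\lambda})$.

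This is a homogeneous $2\times 2$ linear system in $(p,q)$, and it has a nontrivial solution precisely when its determinant vanishes. Computing the determinant of
\[
\begin{pmatrix} \lambda\cosh(\lambda) & \sinh(\sqrt{\lambda}) \\ \lambda\sinh(\lambda) & -\sqrt{\lambda}\cosh(\sqrt{\lambda}) \end{pmatrix}
\]
gives $-\lambda^{3/2}\cosh(\lambda)\cosh(\sqrt{\lambda}) - \lambda\sinh(\lambda)\sinh(\sqrt{\lambda})$, which vanishes if and only if $\sqrt{\lambda}\cosh(\lambda)\cosh(\sqrt{\lambda}) + \sinh(\lambda)\sinh(\sqrt{\lambda}) = 0$ (after dividing by the nonzero factor $\lambda$, and noting the overall sign and the remaining power of $\sqrt\lambda$ can be absorbed). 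I would close the argument by observing that, conversely, whenever \eqref{eq:det=0} holds one can choose $(p,q) \neq (0,0)$ solving the system, and the resulting $x = (u, \lambda u, w)$ is a nonzero element of $D(A)$ (the regularity $u \in H^2$, $w \in H^2$ is automatic, and all four boundary conditions hold by construction), hence a genuine eigenvector.

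The only subtlety — and the one point that needs a careful sentence rather than a routine calculation — is the role of the branch of $\sqrt{\lambda}$: one must check that the equation \eqref{eq:det=0} is well-defined independently of which square root is chosen, which follows since replacing $\sqrt{\lambda}$ by $-\sqrt{\lambda}$ leaves $\cosh(\sqrt\lambda)$ unchanged and sends $\sqrt\lambda\cosh(\sqrt\lambda)$ and $\sinh(\sqrt\lambda)$ each to their negatives, so the whole left-hand side of \eqref{eq:det=0} merely changes sign. Everything else is elementary linear algebra and ODE theory, so I expect no real obstacle beyond bookkeeping.
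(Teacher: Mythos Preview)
Your proposal is correct and follows essentially the same route as the paper's proof: solve the two ODEs with the outer boundary conditions to get $u(\xi)=p\cosh(\lambda(\xi+1))$ and $w(\xi)=q\sinh(\sqrt{\lambda}(\xi-1))$, then reduce the transmission conditions at $\xi=0$ to a $2\times2$ homogeneous system whose determinant condition gives \eqref{eq:det=0}. Your additional remarks on the branch of $\sqrt{\lambda}$ and the explicit converse (constructing the eigenvector when the determinant vanishes) are correct refinements that the paper leaves implicit.
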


\begin{proof}
Suppose that  $\gl\neq 0$ and let
$x=(\v,\vv,\f)\in \ker(\lambda-A)$.
Then 
\begin{subequations}
\label{eq:EVBVPs}
\eqn{
\label{eq:EVDE1}
\hspace{2cm} \v''(\xi)&=\gl^2\v(\xi), \hspace{-0.2cm} & \hspace{-3.3cm} \xi\in(-1,0), \hspace{3.5cm}\\
\hspace{2cm}     \vv(\xi)&=\gl u(\xi), \hspace{-0.2cm}  & \hspace{-4cm} \xi\in(-1,0), \hspace{3.5cm}\\
\label{eq:EVDE3}
\hspace{2cm}     \f''(\xi)&=\gl w(\xi), \hspace{-0.2cm} & \hspace{-4cm} \xi\in(0,1), \hspace{3.5cm} \\
& \hspace{-2.2cm}  \v'(-1)=  \f(1)=0, ~
  \vv(0)=\f(0), ~
  \v'(0)=\f'(0).
}
\end{subequations}
The
 general solution of \eqref{eq:EVDE1}
with the boundary condition $\v'(-1)=0$ is 
  \eqn{
  \label{eq:usol0}
\v(\xi) = \cv \cosh(\gl(\xi+1)),\quad \xi\in[-1,0],
  } 
  where $\cv\in \C$.
  Moreover,
the solution of \eqref{eq:EVDE3}
with the boundary condition $\f(1)=0$ is given by
  \eqn{
  \label{eq:wsol0}
  \f(\xi)=  \cf \sinh(\sql(\xi-1)),\quad \xi\in[0,1],
  }
  where $\cf\in \C$.
Using \eqref{eq:usol0} and \eqref{eq:wsol0}, 
the coupling condition $\f(0)=\vv(0)=\gl \v(0)$ becomes
\eq{
    \gl \cv \cosh(\gl)+  \cf \sinh(\sql)  =0.
}
Similarly, the condition $\f'(0)=\v'(0)$ can be written in the form
\eq{
 \gl\cv\sinh(\gl) -\sql\cf \cosh(\sql)  = 0.
}
Thus $\ker(\lambda-A)=\{0\}$ if and only if the equation
\eq{
\pmat{
\gl  \cosh(\gl) &
 \sinh(\sql)\\[1.5ex] 
\gl\sinh(\gl) & -\sql\cosh(\sql) }
\pmat{\cv\\\cf}
=0
}
 has only the trivial solution $\cv=\cf=0$. In other words,  $\lambda\in\sigma(A)$ if and only if the matrix on the left-hand side has zero determinant. For $\lambda\ne0$, this is equivalent to \eqref{eq:det=0}.
\end{proof}

\begin{prp}
  \label{prp:Xdecomp}
  The space $X$ splits as a topological direct sum 
  $$X=\Ran(A)\oplus\ker(A),$$
   and on the closed subspace $\Ran (A)$ the norm of $X$ is equivalent to the norm $\snorm{\cdot}$ given by 
$$\snorm{(u,v,w)}=\left(\|u'\|_{L^2}^2+\norm{v}_{L^2}^2+\norm{w}_{L^2}^2\right)^{1/2}.$$
\end{prp}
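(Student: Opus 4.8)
The plan is to exhibit the direct sum decomposition as the abstract Jordan-type splitting associated to the eigenvalue $0$ of $A$, and then to identify the complementary subspace concretely as $\Ran(A)$. Since $0$ is an isolated point of $\sigma(A)$ (Proposition \ref{prp:kerA}), the Riesz spectral projection $P$ corresponding to $\{0\}$ is well-defined and bounded, and it yields a topological direct sum $X = \Ran(P) \oplus \ker(P)$ with $\ker(A)\subseteq\Ran(P)$ and $\Ran(A)\subseteq\ker(P)$. The first task is to show that these inclusions are equalities. For this I would argue that $0$ is a \emph{semisimple} (first-order pole) eigenvalue of $A$: since $A$ has compact resolvent, $\Ran(P)$ is finite-dimensional, so it suffices to check $\ker(A^2)=\ker(A)$; and if $Ax \in \ker(A)$ then $Ax = (c,0,0)$ for some $c\in\CC$ by Proposition \ref{prp:kerA}, and writing $x=(u,v,w)\in D(A)$ this forces $v=c$ on $(-1,0)$ with $v(0)=w(0)$, while the heat component gives $w''=0$, $w(1)=0$, $u'(0)=w'(0)$, $u'(-1)=0$; a short computation (of the same type as in the proof of Proposition \ref{prp:kerA}) shows $w=0$ hence $c=w(0)=0$, so $Ax=0$. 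Thus $\Ran(P)=\ker(A)=\Span\{(1,0,0)\}$ is one-dimensional, $\ker(P)=\Ran(A)$ is closed, and $X=\Ran(A)\oplus\ker(A)$.

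Next I would pin down $\Ran(A)$ explicitly. Because $0$ is a simple pole, $\Ran(A) = \ker(P) = \{x\in X : Px = 0\}$, and since $\Ran(P)$ is spanned by $(1,0,0)$ the projection $P$ has the form $Px = \phi(x)\,(1,0,0)$ for a suitable bounded functional $\phi$; evaluating the condition $\phi(1,0,0)=1$ and using the formula for $P$ as a residue of the resolvent one finds that $\Ran(A)$ is the hyperplane $\{(u,v,w)\in X : \int_{-1}^0 v\,d\xi = 0\}$ — alternatively this can be read off directly by solving $Ax = (f,g,h)$ and checking the unique solvability condition. Either way, $\Ran(A)$ is the closed hyperplane on which the functional $(u,v,w)\mapsto\int_{-1}^0 v$ vanishes.

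Finally, for the norm equivalence on $\Ran(A)$: the seminorm $\snorm{\cdot}$ fails to be a norm on all of $X$ only because it annihilates constant functions in the first slot, i.e.\ $\snorm{(u,v,w)}=0$ exactly when $v=w=0$ and $u$ is constant, which is precisely the line $\ker(A)$. On $\Ran(A)$ we clearly have $\snorm{x}\le\|x\|_X$, so it remains to prove the reverse inequality $\|x\|_X \lesssim \snorm{x}$ for $x\in\Ran(A)$. The only missing piece on the left is control of $\|u\|_{L^2(-1,0)}$ by $\|u'\|_{L^2(-1,0)}$, and this is exactly a Poincaré-type inequality: on the hyperplane $\{\int_{-1}^0 v = 0\}$ the coupling $v = \text{(the second component)}$ is unrelated to $u$, so one instead uses that $u\in H^1(-1,0)$ with $u'(-1)=0$ and... \emph{here} lies the one subtlety — a pure $H^1$ function need not have vanishing mean, so the Poincaré estimate must come from the subspace constraint. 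The clean way is to invoke the closed graph / open mapping theorem: the identity map from $(\Ran(A),\|\cdot\|_X)$ to $(\Ran(A),\snorm{\cdot})$ is a continuous bijection between Banach spaces — one checks $\snorm{\cdot}$ is a complete norm on $\Ran(A)$ because a $\snorm{\cdot}$-Cauchy sequence in the closed subspace $\Ran(A)$ has $u',v,w$ convergent in $L^2$ and the constraint $\int v = 0$ together with $u(-1)$ being pinned down... in fact the fastest route is simply: $\snorm{\cdot}$ and $\|\cdot\|_X$ are both norms on the quotient-like space $\Ran(A)$, $\snorm{x}\le\|x\|_X$, and since $\ker(\snorm{\cdot})\cap\Ran(A)=\{0\}$ while $\snorm{\cdot}$ is equivalent to $\|\cdot\|_X$ on the finite-codimensional complement of $\ker(A)$, the equivalence on $\Ran(A)$ follows. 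I expect this last norm-equivalence step to be the main obstacle: it is elementary functional analysis, but getting it cleanly requires either a careful Poincaré inequality adapted to the subspace $\Ran(A)$ or an application of the bounded inverse theorem, and one must be slightly careful that $\snorm{\cdot}$ really is a complete norm on $\Ran(A)$.
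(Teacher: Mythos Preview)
Your Riesz-projection route to the direct sum is valid and is a genuine alternative to the paper's approach: checking $\ker(A^2)=\ker(A)$ (your computation is correct) gives semisimplicity of the eigenvalue $0$, hence $\Ran(P)=\ker(A)$ and $\ker(P)=\Ran(A)$, and the splitting follows. The paper instead solves $Ax=y$ explicitly and reads off the solvability condition as the kernel of a concrete bounded functional, then checks directly that this hyperplane is complementary to $\ker(A)$.

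However, your explicit identification of $\Ran(A)$ is wrong, and this is not a cosmetic slip: it undermines the norm-equivalence step. The hyperplane $\{(u,v,w):\int_{-1}^0 v\,d\xi=0\}$ \emph{contains} $(1,0,0)$, so it contains $\ker(A)$ and cannot be a complement to it --- indeed you yourself note that the functional must satisfy $\phi(1,0,0)=1$, which $(u,v,w)\mapsto\int v$ plainly does not. Solving $Ax=y$ with $y=(f,g,h)$ one finds the correct constraint
\[
\phi(y)= f(0)+\int_{-1}^0 g(\xi)\,d\xi+\int_0^1(1-\xi)h(\xi)\,d\xi=0,
\]
and this is exactly what the paper obtains. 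The correct $\phi$ is precisely what makes the norm equivalence a one-line estimate: for $x=(u,v,w)\in\ker(\phi)$ the constraint gives $|u(0)|\le\|v\|_{L^2}+\|w\|_{L^2}$, and then $\|u\|_{L^\infty}\le|u(0)|+\|u'\|_{L^1}$ yields $\|u\|_{L^2}\lesssim\snorm{x}$ directly. Your attempt via ``$u'(-1)=0$'' cannot work, since that boundary condition holds only on $D(A)$, not on general elements of $\Ran(A)\subset X$. The open-mapping idea is in principle salvageable --- the map $(u,v,w)\mapsto(u',v,w)$ is a bounded surjection of $X$ onto $L^2(-1,0)\times L^2(-1,0)\times L^2(0,1)$ with kernel exactly $\ker(A)$, so the induced bijection from $X/\ker(A)$ is an isomorphism by open mapping --- but you have not carried this through, and with the wrong functional there is no way to pin down $u$ from a $\snorm{\cdot}$-Cauchy sequence.
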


\begin{proof}
 We begin by showing that $\Ran(A)=\ker(\phi)$, where $\phi:X\to\CC$ is the bounded linear functional that maps $x=(u,v,w)\in X$ to 
 \eqn{
\label{eq:phi}
\phi(x)=u(0)+\int_{-1}^0v(\xi)\,d\xi+\int_0^1(1-\xi)w(\xi)\, d\xi.
}
This shows in particular that $\Ran(A)$ is closed and that the projection $x\mapsto(\phi(x),0,0)$ onto  $\ker(A)$ along $\ran(A)$  is bounded. 

Suppose that $y=(\vs,\vvs,\fs)\in \ran(A)$. Then there exists $x=(\v,\vv,\f)\in D(A)$ such that $Ax=y$, which leads to the equations
\begin{subequations}
\label{eq:BVPs}
\eqn{
\label{eq:DE1}
\hspace{2cm} \v''(\xi)&=\vvs(\xi), \hspace{-0.2cm} & \hspace{-4cm} \xi\in(-1,0), \hspace{3.5cm}\\
\hspace{2cm}     \vv(\xi)&=\vs(\xi), \hspace{-0.2cm}  & \hspace{-4cm} \xi\in(-1,0), \hspace{3.5cm}\\
\label{eq:DE3}
\hspace{2cm}     \f''(\xi)&=\fs(\xi), \hspace{-0.2cm} & \hspace{-4cm} \xi\in(0,1), \hspace{3.5cm} \\
& \hspace{-2.2cm}  \v'(-1)=  \f(1)=0, ~
  \vv(0)=\f(0), ~
  \v'(0)=\f'(0).
}
\end{subequations}
A direct computation shows that  the solutions of \eqref{eq:DE1}--\eqref{eq:DE3}, subject to the boundary conditions $\v'(-1)=0$ and $\f(1)=0$, are of the form
\eq{
\v(\xi) &= a + \int_{-1}^\xi(\xi-r)\vvs(r)\,dr,\\[1ex]
\vv(\xi) &= \vs(\xi),\\
\f(\xi)  
&= b(1-\xi) +  \int_{\xi}^1 (r-\xi)\fs(r)\,dr,
}
where $a,b\in\CC$ are constants. It is now easy to verify that the coupling conditions $\vv(0)=\f(0)$ and $\v'(0)=\f'(0)$   can both be satisfied only if $\phi(y)=0$, which shows that $\Ran(A)\subset\ker(\phi)$. 

Conversely, if $y=(\vs,\vvs,\fs)\in X$ is such $\phi(y)=0$, then we can define $x=(\v,\vv,\f)\in Y$ by
\eq{
\v(\xi) &= \int_{-1}^\xi(\xi-r)\vvs(r)\,dr,\\[1ex]
\vv(\xi) &= \vs(\xi),\\
\f(\xi)  
&= (1-\xi)\left(\vs(0)-\int_0^1 r\fs(r)\,dr  \right) +  \int_{\xi}^1 (r-\xi)\fs(r)\,dr.
}
A direct computation shows that $x\in \Dom(A)$ and that $u,v,w$  satisfy \eqref{eq:BVPs}. 
Thus $Ax=y$, and hence  $\ker(\phi)\subset\Ran(A)$.

By Proposition~\ref{prp:kerA}, $\ker(A)=\Span \set{(1,0,0)}$. Together with the above characterisation of $\Ran(A)$ this shows that $X=\Ran(A)\oplus\ker(A)$.
It remains to show that the norms  
$\snorm{\cdot}$ and $\norm{\cdot}$ are equivalent on $\ran(A)$. Let $x=(\v,\vv,\f)\in\ran(A)$. Then
  \eq{
  \norm{\v}_{\Lp[2]}
  &\leq 
  \sup_{-1\leq \xi\leq 0} \abs{\v(\xi)} 
  \leq \abs{\v(0)}+ \sup_{-1\leq \xi\leq 0}\int_{\xi}^0 \abs{\v'(r)}\,dr.
  }
Since  $x\in \ker(\phi)$,
$$|u(0)|\le \int_{-1}^0|v(\xi)|\,d\xi+\int_0^1(1-\xi)|w(\xi)|\, d\xi,$$
and hence 
$\norm{u}_{L^2}\lesssim \norm{u'}_{L^2}+\norm{v}_{L^2}+\norm{w}_{L^2}.$
It follows that $\snorm{x}\lesssim\norm{x}$. Since $\norm{x}\le\snorm{x}$ trivially for all $x\in X$, the proof is complete.  
\end{proof}

\begin{proof}[Proof of Theorem~\textup{\ref{thm:sggeneration}}]
  We show first that $A$ generates a $C_0$-semigroup on $X$. 
Let $x=(\v,\vv,\f)\in \Dom(A)$. A straightforward computation using the definition of $D(A)$ 
and the Cauchy-Schwarz inequality
shows  that
$$\begin{aligned}
\re\iprod{(A-I)x}{x}
\le
\norm{\vv}_{\Lp[2]}\norm{\v}_{\Lp[2]} -\norm{\f'}_{\Lp[2]}^2 -\norm{\v}_{H^1}^2 -\norm{\vv}_{\Lp[2]}^2 -\norm{\f}_{\Lp[2]}^2  ,
\end{aligned}$$
and it follows from the scalar inequality $ab\leq \frac{1}{2}(a^2+b^2)$ that 
$A-I$ is dissipative. Since $1\in\rho(A-I)$ by Propositions~\ref{thm:sggeneration} and \ref{prp:Aeigvals}, the Lumer-Phillips theorem shows that $A-I$ generates a $C_0$-semigroup of contractions on $X$, and hence $A$ generates a $C_0$-semigroup $\Sg$. It remains to show that $\Sg$ is uniformly bounded. 

By Proposition~\ref{prp:Xdecomp}, $X=X_0\oplus X_1$ topologically, where $X_0=\Ran(A)$ and $X_1=\ker(A)$ are closed $\Sg$-invariant subspaces of $X$. Moreover,  $X_0$ is a Hilbert space with the equivalent norm $\snorm{\cdot}$ induced by the inner product
  \eq{
\sprod{x}{y}
  = \iprod{\v'}{\vs'}_{\Lp[2]} + \iprod{\vv}{\vvs}_{\Lp[2]} + \iprod{\f}{\fs}_{\Lp[2]} .
  } 
where $x=(u,v,w)$ and $y=(f,g,h)$.
For $j=0,1$, let $A_j$ denote the restriction of $A$ to $X_j$, so that $D(A_j)=D(A)\cap X_j$. Furthermore, let $\Sg[T_j]$ denote the $C_0$-semigroup obtained by restricting $\Sg$ to $X_j$, noting that $A_j$ is the infinitesimal generator of $\Sg[T_j]$. Given $x=(\v,\vv,\f)\in \Dom(A_0)$, then similarly as in the proof of Proposition~\ref{prp:kerA} 
  \eq{
  \MoveEqLeft \re\, \sprod{A_0 x}{x}
  =-\norm{\f'}_{\Lp[2]}^2 \leq 0.
  }
Thus $A_0$ is dissipative. Note also that the splitting of the space coincides with the spectral decomposition for the eigenvalue 0. In particular, $\sigma(A_0)=\sigma(A)\backslash\{0\}$, and hence $1\in\rho(A_0)$. It follows from the Lumer-Phillips theorem that  $\Sg[T_0]$ is a contraction semigroup.
Moreover, for $x\in X_1$ and $t\ge0$, $T_1(t)x=x$.
Hence, given $x \in X$ and $x=x_0+x_1$ with $x_0\in X_0$, $x_1\in X_1$,  
\eq{
\label{eq:T_bdd}
\norm{T(t)x}
\lesssim \snorm{T_0(t)x_0} + \norm{x_1} 
\leq \snorm{x_0} + \norm{x_1}
\lesssim\|x\|,\quad t\ge0,
}
where the implicit constants are independent of $t$. It follows that $\Sg$ is  uniformly bounded, as required.
\end{proof}

\begin{rem}
\label{rem:A1spec}
The above proof shows that the restriction $A_0$ of $A$ to $\Ran(A)$ satisfies $\sigma(A_0)=\sigma(A)\backslash\{0\}$. In particular, $\sigma(A_0)\subset\CC_-$. 
\end{rem}

\section{Resolvent estimates}
\label{sec:res}

In this section we study the the behaviour of the resolvent operator $R(is,A)$ as $|s|\to\infty$, where $A$ is the generator of the semigroup $\Sg$ studied in Section~\ref{sec:sggen}. First, in Section~\ref{sec:upperbd}, we obtain an upper bound on the growth of $\|R(is,A)\|$ as $\abs{s}\to\infty$, and in Section~\ref{sec:lowerbd} we show that this estimate is optimal. These results will allow us to deduce sharp estimates on the rate of energy decay in Section~\ref{sec:energy} below.

\subsection{An upper bound}
\label{sec:upperbd}

The main result of this section is the following asymptotic upper bound on the operator norm of the resolvent operator.

\begin{thm}
  \label{thm:upperbd}
We have $\norm{R(is,A)}=O(|s|^{1/2})$ as $|s|\to\infty$.
\end{thm}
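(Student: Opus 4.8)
The plan is to obtain an explicit formula for the resolvent $R(is,A)$ acting on a general right-hand side $y=(f,g,h)\in X$, and then to estimate the $X$-norm of the solution in terms of $\|y\|_X$, uniformly for large $|s|$. Concretely, given $y\in X$ and $s\in\RR$ with $s\ne0$, solving $(is-A)x=y$ for $x=(u,v,w)\in D(A)$ amounts to solving the two-sided boundary value problem consisting of $-u''+is\,v = f$ on $(-1,0)$, $v = is\,u - g$ on $(-1,0)$ (so that $u''+s^2u = is\,g - f$), and $-w''+is\,w = h$ on $(0,1)$, subject to $u'(-1)=w(1)=0$ together with the two coupling conditions $v(0)=w(0)$ and $u'(0)=w'(0)$. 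Using variation of parameters one writes down particular solutions plus homogeneous parts $u(\xi)=a\cos(s(\xi+1)) + (\text{forced term})$ and $w(\xi)=b\sinh(\sqrt{is}\,(\xi-1)) + (\text{forced term})$; the two coupling conditions then become a $2\times 2$ linear system for the constants $a,b$ whose coefficient matrix is (up to elementary factors) the matrix appearing in the proof of Proposition~\ref{prp:Aeigvals}, evaluated at $\lambda=is$. The key quantity is therefore its determinant $\Delta(s) = \sqrt{is}\cosh(is)\cosh(\sqrt{is}) + \sinh(is)\sinh(\sqrt{is})$, and the heart of the proof is a lower bound $|\Delta(s)|\gtrsim |s|^{1/2}$ as $|s|\to\infty$ (this is where $is\notin\sigma(A)$ for $s\ne 0$ becomes quantitative).

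First I would set up the variation-of-parameters formulas carefully and record the resulting expressions for $a$ and $b$ by Cramer's rule, $a=\Delta_a(s)/\Delta(s)$, $b=\Delta_b(s)/\Delta(s)$, where the numerators are linear functionals of $y$. The next step is to estimate everything in sight. The forced (particular) parts of $u$ and $w$, and their contributions to the coupling data, are harmless: since $\re\sqrt{is}\to\infty$, the heat component's Green's function on $(0,1)$ is uniformly bounded in $L^2$, and the wave component's forced part is an integral operator against kernels like $\cos(s(\xi-r))/s$ and $\sin(s(\xi-r))$, giving $L^2$ bounds that are $O(1)$ — note $w=is\,u-g$ means one must be a little careful, but $is\,u_{\text{forced}}$ involves $is\cdot(is\,g-f)/s^2$-type terms which are again $O(1)$ in $L^2$. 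So the only possible source of growth is the homogeneous part, governed by $|a|$ and $|b|$. One checks $\|a\cos(s(\cdot+1))\|_{H^1(-1,0)}\lesssim |s|\,|a|$ and similarly the $v$-component $is\cdot a\cos(s(\cdot+1))$ has $L^2$-norm $\lesssim |s|\,|a|$, while $\|b\sinh(\sqrt{is}(\cdot-1))\|_{H^2(0,1)}$-type norms are $\lesssim |b|\,e^{\re\sqrt{is}}$ up to polynomial factors. Meanwhile the numerators $\Delta_a,\Delta_b$ — paired against $y$ — carry compensating factors (the $b$-numerator naturally comes with $e^{-\re\sqrt{is}}$-type decay from $\sinh(\sqrt{is})$ normalisations, and the $a$-numerator with a factor $\lesssim |s|^{0}$ against $\|y\|$). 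Assembling these, $|a|\lesssim |s|^{-1/2}\|y\|$ and the $b$-contribution is likewise controlled, yielding $\|x\|_X\lesssim |s|^{1/2}\|y\|_X$.

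The main obstacle is the lower bound $|\Delta(s)|\gtrsim |s|^{1/2}$. Here one uses the asymptotics $\sinh(\sqrt{is}), \cosh(\sqrt{is}) = \tfrac12 e^{\sqrt{is}}(1+O(e^{-2\re\sqrt{is}}))$, so that $\Delta(s) = \tfrac12 e^{\sqrt{is}}\bigl(\sqrt{is}\cosh(is) + \sinh(is)\bigr)(1+o(1))$, and thus, dividing by the harmless factor $\tfrac12 e^{\sqrt{is}}$ (which is exactly what gets absorbed when we normalise $w$'s homogeneous part to be bounded), the problem reduces to bounding $|\sqrt{is}\cosh(is)+\sinh(is)|$ from below. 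Since $is$ is purely imaginary, $\cosh(is)=\cos(s)$ and $\sinh(is)=i\sin(s)$ are oscillatory and bounded, and $|\sqrt{is}\cos s + i\sin s|\ge |\sqrt{is}|\,|\cos s| - |\sin s| = |s|^{1/2}|\cos s| - |\sin s|$. This is genuinely small only near $s\in\pi/2 + \pi\ZZ$, where $|\cos s|$ is of order $\dist(s,\pi/2+\pi\ZZ)$ and $|\sin s|\approx 1$; there the triangle inequality is the wrong tool and one must instead note that on the imaginary axis $\Delta$ cannot vanish (Theorem~\ref{thm:sggeneration}), combine this with the derivative/second-order behaviour of $\Delta$ near those points, or — more cleanly — observe $|\sqrt{is}\cos s + i\sin s|^2 = |s||\cos s|^2 + \sin^2 s + 2\re(\sqrt{is}\,\overline{i\sin s})\cos s$ and, writing $\sqrt{is}=|s|^{1/2}e^{i\pi/4}\,\mathrm{sgn}$, extract a term of order $|s|^{1/2}|\cos s|\,|\sin s|$ from the cross term; since $\cos^2 s$ and $\sin^2 s$ cannot be simultaneously small, this shows $|\Delta(s)|^2 \gtrsim |s|^{1/2}$ in a form that survives, hence $|\Delta(s)|\gtrsim |s|^{1/4}$ at worst near the bad points and $\gtrsim |s|^{1/2}$ away from them — and a slightly more careful look at the cross term (whose sign can be arranged by choosing the branch of the square root consistently on each strip) in fact restores the full $|s|^{1/2}$. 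I expect the bulk of the write-up to be this asymptotic analysis of $\Delta$ near $\pi/2+\pi\ZZ$, with the operator-norm bookkeeping being routine once the lower bound on $|\Delta(s)|$ and the bounds on the numerators are in hand.
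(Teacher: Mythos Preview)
Your architecture is exactly the paper's: explicit resolvent via variation of parameters, a $2\times2$ system for the homogeneous constants, and a lower bound on the determinant. But two of the steps you label routine actually carry the proof, and your determinant argument does not close as written.

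\textbf{The determinant.} After factoring out $\tfrac12 e^{\sqrt{is}}$ you correctly reduce to bounding $Q(s):=\bigl|\sqrt{is}\cos s+i\sin s\bigr|$ from below. The correct target is a \emph{uniform constant}, not a power of $|s|$: at $s=(n+\tfrac12)\pi$ one has $Q(s)=1$, so $Q$ does not grow, and your discussion of $|s|^{1/4}$ versus $|s|^{1/2}$ is off-track. Your cross-term idea cannot work either, because the branch of the square root is fixed globally and the sign of $\sin s\cos s$ genuinely oscillates. The clean fix (this is the paper's Lemma~\ref{lem:Gfunest}) is to complete the square: for $s>0$ write $\sqrt{is}=\tfrac{|s|^{1/2}}{\sqrt2}(1+i)$, whence
\[
Q(s)^2=\frac{|s|}{2}\cos^2 s+\Bigl(\frac{|s|^{1/2}}{\sqrt2}\cos s+\sin s\Bigr)^2.
\]
If $|s|^{1/2}|\cos s|\ge\tfrac12$ the first summand already gives $Q(s)^2\ge\tfrac18$; otherwise $|\sin s|\ge\tfrac1{\sqrt2}$ and the second summand gives $Q(s)^2\ge\tfrac18$. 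The cross term you were worried about is precisely what completes the square.

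\textbf{The ``routine'' estimates.} Two further manoeuvres are essential. (i) In the forced wave part the factor $is$ multiplies the $H^1$-component of $y$ (your labels are swapped), so $\int_{-1}^\xi\sin(s(\xi-r))\,is\,f(r)\,dr$ is only $O(|s|)$ by Cauchy--Schwarz; one must integrate by parts, using $f\in H^1(-1,0)$, to obtain the uniform bound (this is Lemma~\ref{lem:Vfunest}). (ii) In the formula for $a(s)$ and in the expression for $w(\xi)$, the contributions of the heat particular solution $W_s$ are \emph{individually} of exponential size even after dividing by $\det M(s)$, because both $\cosh(\sqrt{is})$ and $\int_0^1\sinh(\sqrt{is}\,r)h(r)\,dr$ carry a factor $e^{|s|^{1/2}/\sqrt2}$. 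They must first be paired and collapsed via the addition formula
\[
\sinh(\sqrt{is})\cosh(\sqrt{is}\,r)-\cosh(\sqrt{is})\sinh(\sqrt{is}\,r)=\sinh\bigl(\sqrt{is}\,(1-r)\bigr)
\]
before the denominator can absorb the exponential. Without this recombination the estimate for $w$ (and for $|s\,a(s)|$) blows up; with it, one obtains the claimed $\|x\|\lesssim|s|^{1/2}\|y\|$.
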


We begin with two technical estimates.

\begin{lem}
  \label{lem:Gfunest}
  There exists $c>0$ such that 
  \eq{
  \Abs{\sqrt{is}  \cos(s)\cosh(\sqlwi) +i \sin(s)\sinh(\sqlwi)}\geq c \exp\bigg(\frac{\abs{s}^{1/2}}{\sqrt{2}}\bigg),\quad \abs{s}\geq 2.
  }
\end{lem}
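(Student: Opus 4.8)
The plan is to isolate the exponentially growing part of the expression by expanding the hyperbolic functions. Write $E(s)$ for the quantity on the left-hand side and put $z=\sqrt{is}$. For the principal branch one has $z=a(1+i)$ if $s>0$ and $z=a(1-i)$ if $s<0$, where $a:=\re z=\abs{\im z}=\abs{s}^{1/2}/\sqrt2$; in particular $a\ge1$ exactly when $\abs{s}\ge2$ (and $E(s)$ is unchanged if $\sqrt{is}$ is replaced by $-\sqrt{is}$, since $\cosh$ is even and $\sinh$ odd, so the choice of branch is immaterial). Substituting $\cosh z=\tfrac12(e^{z}+e^{-z})$ and $\sinh z=\tfrac12(e^{z}-e^{-z})$ gives
\[
E(s)=\frac{e^{z}}{2}\,g_+(s)+\frac{e^{-z}}{2}\,g_-(s),
\qquad g_{\pm}(s):=\sqrt{is}\,\cos s\pm i\sin s ,
\]
and since $\abs{e^{z}}=e^{a}$ and $\abs{e^{-z}}=e^{-a}$, the triangle inequality yields $\abs{E(s)}\ge \tfrac12 e^{a}\abs{g_+(s)}-\tfrac12 e^{-a}\abs{g_-(s)}$. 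It therefore suffices to bound $\abs{g_+(s)}$ below by a fixed positive constant and $\abs{g_-(s)}$ above by a power of $\abs{s}$, and then to check that the growing term dominates for every $\abs{s}\ge2$.

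For $g_-$ the trivial estimate $\abs{g_-(s)}\le\abs{\sqrt{is}}\,\abs{\cos s}+\abs{\sin s}\le\abs{s}^{1/2}+1$ is all that is needed. For $g_+$ one computes, from $\re g_+(s)=a\cos s$ and $\im g_+(s)=\pm a\cos s+\sin s$, that
\[
\abs{g_+(s)}^{2}=\bigl(a^{2}+\tfrac12\bigr)+\bigl(a^{2}-\tfrac12\bigr)\cos 2s\pm a\sin 2s ,
\]
and hence, for $a\ge1$,
\[
\abs{g_+(s)}^{2}\ \ge\ \bigl(a^{2}+\tfrac12\bigr)-\sqrt{a^{4}+\tfrac14}\;=\;\frac{a^{2}}{\bigl(a^{2}+\tfrac12\bigr)+\sqrt{a^{4}+\tfrac14}}\ \ge\ \frac{a^{2}}{2a^{2}+1}\ \ge\ \tfrac13 ,
\]
the amplitude of the oscillating part being $\sqrt{(a^{2}-\tfrac12)^{2}+a^{2}}=\sqrt{a^{4}+\tfrac14}$. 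Thus $\abs{g_+(s)}\ge1/\sqrt3$ whenever $\abs{s}\ge2$.

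Combining these bounds, $\abs{E(s)}\ge e^{a}\bigl(\tfrac{1}{2\sqrt3}-\tfrac12 e^{-2a}(\abs{s}^{1/2}+1)\bigr)$. Writing $t=\abs{s}^{1/2}$ one has $2a=\sqrt2\,t$, and the function $t\mapsto e^{-\sqrt2\,t}(t+1)$ is decreasing, so on the range $\abs{s}\ge2$ (i.e.\ $t\ge\sqrt2$) it is at most $(1+\sqrt2)e^{-2}$. Since $(1+\sqrt2)e^{-2}<1/\sqrt3$ (equivalently $e^{2}>\sqrt3\,(1+\sqrt2)$), the bracket is bounded below by the positive constant $c:=\tfrac{1}{2\sqrt3}-\tfrac{1+\sqrt2}{2e^{2}}$, and we conclude $\abs{E(s)}\ge c\,e^{a}=c\exp\bigl(\abs{s}^{1/2}/\sqrt2\bigr)$ for all $\abs{s}\ge2$, as claimed.

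The only genuinely non-routine point is the uniform lower bound $\abs{g_+(s)}\gtrsim1$: a priori $\cos s$ and $\sin s$ could conspire to make $g_+(s)$ small, so one has to carry out the little trigonometric optimisation above (it is exactly this step that forces the constant $\tfrac{1}{\sqrt2}$ in the exponent, via $\re\sqrt{is}=\abs{s}^{1/2}/\sqrt2$). Everything else is an elementary estimate, and the closing numerical inequality $e^{2}>\sqrt3(1+\sqrt2)$ holds with a comfortable margin (about $7.39$ against $4.18$), so I do not anticipate any real difficulty.
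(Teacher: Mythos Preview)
Your proof is correct and follows essentially the same route as the paper: both decompose the expression as $\tfrac{1}{2}e^{z}g_{+}(s)+\tfrac{1}{2}e^{-z}g_{-}(s)$ with $g_{\pm}(s)=\sqrt{is}\cos s\pm i\sin s$, bound $|g_{-}|$ trivially, and show $|g_{+}|$ is bounded below by a positive constant. The only difference is in this last step: the paper obtains $|g_{+}(s)|\ge 1/(2\sqrt2)$ via a case split on whether $|s|^{1/2}|\cos s|\ge\tfrac12$, whereas you compute $|g_{+}(s)|^{2}$ explicitly as a constant plus a sinusoid and minimise to get $|g_{+}(s)|\ge 1/\sqrt3$; both arguments are equally short and elementary. (One tiny imprecision: replacing $\sqrt{is}$ by $-\sqrt{is}$ sends $E(s)$ to $-E(s)$, not to itself, but of course this leaves $|E(s)|$ unchanged, so your conclusion that the branch is immaterial stands.)
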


\begin{proof}
  Let $\gw\in\R$ with $\abs{\gw}\geq 2$.
By splitting $\sqrt{is}$ into real and imaginary parts and writing the hyperbolic functions in terms of exponentials, it is easy to see that
\eq{
\MoveEqLeft 
2\exp\bigg(-\frac{\abs{s}^{1/2}}{\sqrt{2}}\bigg)
\Abs{ \sqlwi \cos(\gw)\cosh(\sqlwi) + i   \sin(\gw)\sinh(\sqlwi)}\\
&\ge  \bigl|\sqlwi  \cos(\gw) \pm i \sin(\gw)\big|
- \exp\big(-\sqrt{2}|s|^{1/2}\big) \big|\sqlwi  \cos(\gw) \mp i \sin(\gw)\big|.
}
Note that
\eq{
\MoveEqLeft
\big|\sqlwi  \cos(\gw) \pm i \sin(\gw)\big|^2
=\frac{\abs{\gw}}{2} \cos^2(\gw) + \bigg({\frac{\abs{\gw}^{1/2}}{\sqrt{2}}}  \cos(\gw) \pm \sin(\gw)\bigg)^2.
}
Thus if $|s|^{1/2}\abs{\cos(\gw)}\geq 1/2$, then 
\begin{equation}
\label{eq:lower_est}
\big|\sqlwi  \cos(\gw) \pm i \sin(\gw)\big|
\geq \frac{1}{2\sqrt{2}}.
\end{equation}
Suppose, on the other hand, that $|s|^{1/2}\abs{\cos(\gw)}< 1/2$. Since $|s|\ge2$, it follows that $\abs{\cos(\gw)}\leq 1/(2\sqrt{2})$, and hence $\abs{\sin(\gw)}\geq 1/\sqrt{2}$. Thus
\eq{
\bigg|{\frac{\abs{\gw}^{1/2}}{\sqrt{2}}}  \cos(\gw) \pm \sin(\gw)\bigg|
\geq
\abs{\sin(\gw)} - {\frac{\abs{\gw}^{1/2}}{\sqrt{2}}}  \abs{\cos(\gw)}  
\geq 
\frac{1}{2\sqrt{2}},
}
so \eqref{eq:lower_est} holds in this case as well.
Since
\eq{
\exp\big(-\sqrt{2}|s|^{1/2}\big) \big|\sqlwi  \cos(\gw) \mp i \sin(\gw)\big|<\frac{1}{3}<\frac{1}{2\sqrt{2}}
}
whenever $\abs{\gw}\geq 2$, the result follows.
\end{proof}

\begin{lem}
  \label{lem:Vfunest}
There exists a constant $C\geq 0$ such that,
for all $f\in H^1(-1,0),$ $g\in L^2(-1,0)$, $s\in\R\setminus \set{0}$ and
 $\xi \in [-1,0]$,
  \eq{
  \Abs{\int_{-1}^\xi \sin(\gw(\xi-r))\big(i\gw \vs(r)+\vvs(r)\big)\,dr}
  &\leq C  \norm{\vs}_{H^1} + \norm{\vvs}_{\Lp[2]} , \\
  \Abs{\int_{-1}^\xi \cos(\gw(\xi-r))\big(i\gw \vs(r)+\vvs(r)\big)\,dr}
  &\leq C  \norm{\vs}_{H^1} + \norm{\vvs}_{\Lp[2]}.
  } 
\end{lem}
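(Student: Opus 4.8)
The plan is to treat both inequalities in the same way; the only delicate point is the factor $\gw$ multiplying $\vs$, since estimating that term directly would give a bound of order $\abs{\gw}\,\norm{\vs}_{\Lp[1]}$, which is useless as $\abs{\gw}\to\infty$. The remedy is to integrate by parts so as to move the factor $\gw$ onto the oscillatory kernel, at the cost of transferring one derivative onto $\vs$; this is exactly why $\norm{\vs}_{H^1}$ rather than merely $\norm{\vs}_{\Lp[2]}$ appears on the right-hand side. The $\vvs$-contribution needs no such trick.

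First I would split each integrand into its $\vs$-part and its $\vvs$-part. For the $\vvs$-part, the estimate
\[
\Abs{\int_{-1}^\xi \sin(\gw(\xi-r))\,\vvs(r)\,dr}\le\int_{-1}^0\abs{\vvs(r)}\,dr\le\norm{\vvs}_{\Lp[2]}
\]
follows from $\abs{\sin}\le1$ and the Cauchy--Schwarz inequality, using that $(-1,0)$ has length $1$; this is what produces the coefficient $1$ in front of $\norm{\vvs}_{\Lp[2]}$, and the same bound holds with $\sin$ replaced by $\cos$. For the $\vs$-part I would use the identity $\gw\sin(\gw(\xi-r))=\ddb{r}\cos(\gw(\xi-r))$ and integrate by parts, obtaining
\begin{align*}
\int_{-1}^\xi i\gw\sin(\gw(\xi-r))\,\vs(r)\,dr
&= i\vs(\xi)-i\cos(\gw(\xi+1))\,\vs(-1)\\
&\qquad -i\int_{-1}^\xi \cos(\gw(\xi-r))\,\vs'(r)\,dr .
\end{align*}
Here each of the two boundary terms is at most $\norm{\vs}_{L^\infty(-1,0)}\lesssim\norm{\vs}_{H^1}$ by the Sobolev embedding $H^1(-1,0)\hookrightarrow C[-1,0]$, while the remaining integral is at most $\int_{-1}^0\abs{\vs'(r)}\,dr\le\norm{\vs'}_{\Lp[2]}\le\norm{\vs}_{H^1}$ by Cauchy--Schwarz; adding the pieces gives the first estimate with $C$ depending only on the embedding constant. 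The cosine estimate is identical after using $\gw\cos(\gw(\xi-r))=-\ddb{r}\sin(\gw(\xi-r))$: integration by parts then yields $i\sin(\gw(\xi+1))\,\vs(-1)+i\int_{-1}^\xi\sin(\gw(\xi-r))\,\vs'(r)\,dr$, the boundary term at $r=\xi$ dropping out since $\sin 0=0$, and this is bounded exactly as before.

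I do not expect any genuine obstacle: the lemma is elementary once one spots that the factor $\gw$ has to be absorbed through an integration by parts rather than estimated pointwise, and that the derivative this requires of $\vs$ is available precisely because $\vs\in H^1(-1,0)$. The only thing to monitor is that the $\vvs$-terms, which never interact with the $H^1$-estimates, contribute exactly $\norm{\vvs}_{\Lp[2]}$ with constant $1$, so that the asymmetric right-hand side of the statement comes out as written. This lemma will serve to bound, uniformly in $\gw$, the variation-of-constants part of the solution of the resolvent equation in the proof of \refThm{thm:upperbd}.
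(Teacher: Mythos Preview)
Your proof is correct and follows essentially the same approach as the paper: integrate the $\vs$-term by parts to trade the factor $\gw$ for a derivative on $\vs$, then use Cauchy--Schwarz on the $\vvs$-term. The only cosmetic difference is that the paper rewrites the result of the integration by parts as $\int_{-1}^\xi (1-\cos(\gw(\xi-r)))\vs'(r)\,dr+(1-\cos(\gw(\xi+1)))\vs(-1)$, absorbing the $\vs(\xi)$ boundary term into the integral via $\int_{-1}^\xi \vs'=\vs(\xi)-\vs(-1)$, so that only the evaluation at $-1$ appears; you instead keep both boundary terms and invoke the Sobolev embedding, which is equivalent.
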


\begin{proof}
Using integration by parts it is easy to show that
$$
\begin{aligned}
\int_{-1}^\xi \sin(\gw(\xi-r))s\vs(r)\,dr&=\int_{-1}^\xi \big(1-\cos(\gw(\xi-r))\big)\vs'(r)\,dr\\
&\qquad\qquad +  \big(1-\cos(\gw(\xi+1))\big)\vs(-1).
\end{aligned}
$$
  Since the evaluation functional $f\mapsto f(-1)$ is continuous on $H^1(-1,0)$, it follows  that 
  $$
 \Abs{ \int_{-1}^\xi \sin(\gw(\xi-r))s\vs(r)\,dr}\lesssim \norm{f}_{H^1}.
  $$
  An analogous argument shows that 
  $$
   \Abs{ \int_{-1}^\xi \cos(\gw(\xi-r))s\vs(r)\,dr}\lesssim \norm{f}_{H^1},
$$
  and the result follows by an application of the Cauchy-Schwarz inequality to the terms involving $g$.
\end{proof}

\begin{proof}[Proof of Theorem~\textup{\ref{thm:upperbd}}]
  For $s\in\RR$ with $\abs{s}\geq 2$, let $y=(\vs,\vvs,\fs)\in X$ and define $x=(\v,\vv,\f)\in \Dom(A)$ by $x=R(is,A)y$. Then 
 \eq{
\norm{x}
=\left(  \norm{\v}_{H^1}^2 + \norm{\vv}_{\Lp[2]}^2 + \norm{\f}_{\Lp[2]}^2 \right)^{1/2}
\lesssim  \norm{\v}_{\Lp[2]} + \norm{\v'}_{\Lp[2]} + \norm{\vv}_{\Lp[2]} + \norm{\f}_{\Lp[2]}
} 
and since $(is-A)x=y$ implies that $\vv=is\v-\vs$, this becomes 
\eqn{
\label{eq:xest}
\norm{x}\lesssim \norm{s\v}_{\Lp[2]} + \norm{\v'}_{\Lp[2]} +  \norm{\f}_{\Lp[2]} + \norm{\vs}_{\Lp[2]} ,\quad |s|\ge2.
}
Thus the result will follow once we have established the following estimates:\begin{subequations}
  \label{eq:wavepartest}
   \eqn{
\label{eq:uest}  
\norm{s \v}_{\Lp[2]}
  &\lesssim \abs{s}^{1/2} \norm{\vs}_{H^1} +\abs{s}^{1/2}  \norm{\vvs}_{\Lp[2]} +  \norm{\fs}_{\Lp[2]},\\
  \label{eq:upest}
\norm{ \v'}_{\Lp[2]}
  &\lesssim \abs{s}^{1/2} \norm{\vs}_{H^1} +\abs{s}^{1/2}  \norm{\vvs}_{\Lp[2]} +  \norm{\fs}_{\Lp[2]},\\
  \label{eq:west} 
  \|w\|_{L^2}&\lesssim  \norm{\vs}_{H^1} +\norm{\vvs}_{\Lp[2]} +  \norm{\fs}_{\Lp[2]}
  }
\end{subequations}
for all $s\in\RR$ with 
$\abs{s}\geq 2$. Indeed, by \eqref{eq:xest} these estimates imply that $\|x\|\lesssim |s|^{1/2}\|y\|$ for $|s|\ge2$.
Since $y\in X$ was arbitrary, it then follows that $\norm{R(is,A)}=O(\abs{s}^{1/2})$ as $|s|\to\infty$, as required. 

Let $s\in \R$ be such that $\abs{s}\geq 2$. We begin by deriving formulas for the components $u$ and $w$ of the vector $x=(\v,\vv,\f)$. Note first that the equation $(is-A)x=y$ is equivalent to the following system of boundary value problems: 
\begin{subequations}
  \eqn{
  \label{eq:ResBVP1}
  \hspace{1cm} \v''(\xi)&=-s^2\v(\xi)-is\vs(\xi)-\vvs(\xi), \hspace{-0.2cm} & \hspace{-2cm} \xi\in(-1,0), \hspace{0.8cm}\\
  \hspace{1cm}    \vv(\xi)&=is\v(\xi)-\vs(\xi), \hspace{-0.2cm}  & \hspace{-3.3cm} \xi\in(-1,0), \hspace{0.8cm}\\
  \label{eq:ResBVP3}
  \hspace{1cm}     \f''(\xi)&=is\f(\xi)-\fs(\xi), \hspace{-0.2cm} & \hspace{-3.3cm} \xi\in(0,1), \hspace{0.8cm} \\
  & \hspace{-0.9cm}  \v'(-1)=  \f(1)=0, ~
  \vv(0)=\f(0), ~
  \v'(0)=\f'(0).
  }
\end{subequations}
Let
  \begin{equation*}
    \label{eq:Vdef}
    \Ivs[s]{\xi} =  \frac{1}{s}\int_{-1}^\xi \sin(s (\xi-r))\big(is \vs(r)+\vvs(r)\big)\,dr,\quad \xi\in[-1,0],
  \end{equation*}
  and note that 
    \begin{equation*}
    \label{eq:Uder}
    \Ivc[s]{\xi} = \int_{-1}^\xi \cos(s (\xi-r))\big(is \vs(r)+\vvs(r)\big)\,dr,\quad \xi\in[-1,0].
    \end{equation*}
The
general solution of the differential equation~\eqref{eq:ResBVP1}
  with the boundary condition $\v'(-1)=0$ can be written as 
    \eqn{
    \label{eq:usol}
\v(\xi) &=  \cv[s] \cos(s(\xi+1))-  \Ivs{\xi},\quad \xi\in[-1,0],
}
  where $\cv[s]\in \C$. In particular,
\eqn{
\label{eq:uder}
\v'(\xi) &=  -s\cv[s]\sin(s(\xi+1)) -  \Ivc{\xi} ,\quad \xi\in[-1,0].
  } 
Furthermore, let  
\begin{equation}
    \label{eq:Wdef}
    \Ifs{\xi} = - \frac{1}{\sqlwi}\int_\xi^1 \sinh(\sqlwi (r-\xi)) \fs(r)dr,\quad \xi\in[0,1],
\end{equation}
noting that
\begin{equation}
    \label{eq:Wder}
 \Ifc{\xi} = \int_\xi^1 \cosh(\sqlwi (r-\xi)) \fs(r)dr, \quad \xi\in[0,1].
 \end{equation}
 The general solution of the differential equation~\eqref{eq:ResBVP3}
  with the boundary condition $\f(1)=0$ is given by
  \eqn{
  \label{eq:wsol}
  \f(\xi)&=   -\cf[s]\sinh(\sqlwi(1-\xi))\ +  \Ifs{\xi} ,\quad \xi\in[0,1],
  }
  where $\cf[s]\in \C$. In particular,
  \eq{
  \f'(\xi)&=  \sqlwi\,\cf[s] \cosh(\sqlwi(1-\xi))+ \Ifc{\xi},\quad \xi\in[0,1].
  }
It remains to determine the constants $\cv[s]$ and $\cf[s]$. 
Using~\eqref{eq:usol} and~\eqref{eq:wsol}, the coupling condition $\f(0)=\vv(0)=is \v(0)-\vs(0)$ is easily seen to be equivalent to
\eq{
      is \cv[s]\cos(s)+   \cf[s]\sinh(\sqlwi)  =  \vs(0) + is\Ivs{0} +  \Ifs{0},
}
and similarly the condition $\f'(0)=\v'(0)$ is equivalent to
\eq{
  s\cv[s]\sin(s) +\sqlwi \,\cf[s] \cosh(\sqlwi) = - \Ivc{0}- \Ifc{0}.
}
These two equations can be written in the form
\eqn{
\label{eq:matrix}
\MoveEqLeft 
\pmat{
is  \cos(s) &
 \sinh(\sqlwi)\\[1.5ex] 
s\sin(s) & \sqlwi\cosh(\sqlwi) }
\pmat{\cv[s]\\[.5ex]\cf[s]}
=
\pmat{ \vs(0) +  is\Ivs{0} + \Ifs{0} \\[.5ex]
-\Ivc{0} - \Ifc{0}
}.
}
Let
\eq{
M(s) =
\pmat{
is  \cos(s) &
 \sinh(\sqlwi)\\[1.5ex] 
s\sin(s) & \sqlwi\cosh(\sqlwi) }.  
    }
Then
\eqn{
\label{eq:det}
\det M(s)
= (is)^{3/2}  \cos(s)\cosh(\sqlwi) -s   \sin(s)\sinh(\sqlwi),
}
and Theorem~\ref{thm:sggeneration} implies that $\det M(s)\neq 0$ for all $s\in\RR$ with  $s\neq 0$. In particular, $M(s)$ is invertible for $\abs{s}\geq 2$.
Solving~\eqref{eq:matrix} finally gives
\begin{equation}
\begin{aligned}
\label{eq:a}
\cv[s] &= \frac{\sqlwi\cosh(\sqlwi)}{\det M(s)} \big(\vs(0) + is\Ivs[s]{0}+ \Ifs[s]{0}\big)  \\
&\qquad\qquad
+ \frac{\sinh(\sqlwi)}{\det M(s)} \big( \Ivc[s]{0} + \Ifc[s]{0}\big), 
\end{aligned}
\end{equation}
\vspace{-2ex}
\begin{equation}
\begin{aligned}
\label{eq:b}
\cf[s] &= 
 -\frac{s \sin(s)}{\det M(s)}  \big(  \vs(0) +  is\Ivs[s]{0}+W_s(0)\big)\\
  &\qquad\qquad
-  \frac{is\cos(\gw)}{\det M(s)} \big( \Ivc[s]{0}+\Ifc[s]{0}\big).
\end{aligned}
\end{equation}

We  now establish the estimates~\eqref{eq:uest} and~\eqref{eq:upest}. 
By \eqref{eq:usol} and \eqref{eq:uder},
\eqn{
\label{eq:uest2}
|su(\xi)|\le |sa(s)|+|sU_s(\xi)|\quad \mbox{and}\quad|u'(\xi)|\le |sa(s)|+|U'_s(\xi)|
}
for all $\xi\in[-1,0]$, and Lemma~\ref{lem:Vfunest} gives
\eqn{
\label{eq:Uest}
\sup_{\xi\in[-1,0]} \abs{s\Ivs[s]{\xi}} 
,\;
\sup_{\xi\in[-1,0]}\abs{\Ivc[s]{\xi}}
\lesssim \norm{\vs}_{H^1} + \norm{\vvs}_{\Lp[2]}.
} 
It therefore  remains to estimate $|sa(s)|$ for $|s|\ge2$. 
A simple estimate based on  Lemma~\ref{lem:Gfunest} shows that
\eqn{
\label{eq:detest}
\bigg|\frac{{s \cosh(r\sqlwi)}}{{\det M(s)}}\bigg|\lesssim 1\quad\mbox{and}\quad\bigg|\frac{{s \sinh(r\sqlwi)}}{{\det M(s)}}\bigg|\lesssim 1
}
for $\abs{r}\leq 1$ and $|s|\ge2$. 
Using these together with \eqref{eq:Wdef}, \eqref{eq:Wder} and \eqref{eq:Uest} in \eqref{eq:a} gives
\eq{
\abs{s\cv[s]}
&\leq \biggl| \sqlwi \,\frac{s\cosh(\sqlwi)}{\det M(s)} \big(\vs(0) + is\Ivs[s]{0} \big)
+ \frac{s\sinh(\sqlwi)}{\det M(s)} \Ivc[s]{0}\biggr| \\
& \qquad\qquad + \biggl|\int_0^1 \frac{ s\sinh(\sqlwi(1-r)) }{\det M(\gw)} \fs(r)\,dr \biggr|\\
&\lesssim \abs{s}^{1/2} \norm{\vs}_{H^1} +\abs{s}^{1/2} \norm{\vvs}_{\Lp[2]} 
+ \norm{\fs}_{\Lp[2]}
}
for $|s|\ge2$. Thus \eqref{eq:uest} and and~\eqref{eq:upest} follow from \eqref{eq:uest2} and \eqref{eq:Uest}.

We now turn to proving \eqref{eq:west}. 
By \eqref{eq:wsol},~\eqref{eq:det}  and \eqref{eq:b}, we have that
\eq{
\f(\xi)  
&= \frac{s \sinh(\sqlwi(1-\xi))}{\det M(s)}  \Bigl(\sin(s)  \big(  \vs(0) +  is\Ivs[s]{0}\big) + i \cos(\gw)  \Ivc[s]{0}\Bigr)\\
&\qquad+  \frac{s\sin(s)}{\det M(s)} \Bigl( \sinh(\sqlwi(1-\xi))    \Ifs[s]{0} - \sinh(\sqlwi)   \Ifs[s]{\xi} \Bigr) \\
&\qquad+ \frac{is\cos(\gw)}{\det M(s)} \Bigl( \sinh(\sqlwi(1-\xi)) \Ifc[s]{0} 
+ \sqlwi \cosh(\sqlwi) \Ifs[s]{\xi} \Bigr)
} 
for all $\xi\in[0,1]$. Let $w_1(\xi)$, $w_2(\xi)$, $w_3(\xi)$, respectively, denote the three terms 
on the right-hand side of this equation.
We estimate $w_1$, $w_2$, $w_3$ in turn. From \eqref{eq:Uest} and \eqref{eq:detest} it is clear that $$|w_1(\xi)|\lesssim\|f\|_{H^1}+\|g\|_{L^2},\quad \xi\in[0,1].$$
Turning  to $w_2$, it follows from an elementary calculation using \eqref{eq:Wdef} that
$$
\begin{aligned}
2\sqlwi&\Big( \sinh(\sqlwi(1-\xi))   \Ifs{0} -
\sinh(\sqlwi)   \Ifs{\xi}\Big) \\
&=   \int_0^1  \cosh(\sqlwi (1-\xi-r)) \fs(r)\,dr - \int_0^\xi  \cosh(\sqlwi (1-\xi+r)) \fs(r)\,dr\\
&\qquad\qquad -   \int_\xi^1 \cosh(\sqlwi(1+\xi-r))  \fs(r)\,dr
\end{aligned}
$$
for all $\xi\in[0,1]$.
Hence a simple estimate using \eqref{eq:detest} gives 
$$|s|^{1/2}|w_2(\xi)|\lesssim \norm{h}_{L^2},\quad\xi\in[0,1].$$
An analogous argument shows that 
$$|w_3(\xi)|\lesssim \norm{h}_{L^2},\quad\xi\in[0,1].$$
Combining these estimates gives  \eqref{eq:west}, thus completing the proof.
\end{proof}

\subsection{Optimality of the upper bound}
\label{sec:lowerbd}

The following result shows that the spectrum $\sigma(A)$ of $A$ contains sequences of eigenvalues whose real parts tend to $\pm\infty$ and whose imaginary parts approach zero at a  rate which  shows that the resolvent estimate in Theorem~{\ref{thm:upperbd}} cannot be improved.

\begin{thm}
\label{thm:opt}
There exist sequences $(\gl_n^\pm)$ in $\sigma(A)$ and a constant $C>0$  such that $\im \gl_n^\pm\sim \pm n\pi$ as $n\to\infty$ and
\eqn{
\label{eq:sequ}
-\frac{C}{|\im\gl_n^\pm|^{1/2}}\le\re\gl_n^\pm<0,\quad n\ge0.
}
In particular,  
 $$\limsup_{|s|\to\infty}|s|^{-1/2}\|R(is,A)\|>0.$$
\end{thm}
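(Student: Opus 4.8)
The plan is to produce eigenvalues of $A$ lying close to the points $\lambda_0:=i(n+\tfrac12)\pi$ for large $n$, the point being that $\cosh$ vanishes there, so that the eigenvalue equation \eqref{eq:det=0} can be solved perturbatively. By Proposition~\ref{prp:Aeigvals} the nonzero elements of $\sigma(A)$ are precisely the zeros of
\[
F(\lambda):=\sqrt{\lambda}\,\cosh(\lambda)\cosh(\sqrt{\lambda})+\sinh(\lambda)\sinh(\sqrt{\lambda}),
\]
and since $\overline{F(\overline{\lambda})}=F(\lambda)$ for $\lambda\notin(-\infty,0]$ it suffices to construct the sequence $(\lambda_n^+)$ with $\im\lambda_n^+\sim n\pi$ and to set $\lambda_n^-:=\overline{\lambda_n^+}$, which then automatically lies in $\sigma(A)$, satisfies $\im\lambda_n^-\sim-n\pi$, and obeys the same estimate \eqref{eq:sequ}.

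Fix a large integer $n$ and write $\lambda=\lambda_0+\delta$. The addition formulas for $\cosh$ and $\sinh$ give the exact identities $\cosh(\lambda_0+\delta)=i(-1)^n\sinh\delta$ and $\sinh(\lambda_0+\delta)=i(-1)^n\cosh\delta$. Since $\cosh(\sqrt{\lambda})\ne0$ for $\lambda$ near $\lambda_0$ (its zeros lie on the negative real axis) and $\cosh\delta\ne0$ for $|\delta|<1$, dividing $F(\lambda)=0$ by $i(-1)^n\cosh(\sqrt{\lambda})\cosh\delta$ recasts it as
\[
\tanh\delta=-\frac{\tanh\sqrt{\lambda_0+\delta}}{\sqrt{\lambda_0+\delta}}=:\psi_n(\delta).
\]
Routine estimates, using that $\re\sqrt{\lambda_0+\delta}\sim n^{1/2}$ (so $\tanh\sqrt{\lambda_0+\delta}=1+O(e^{-c\sqrt{n}})$) and $|\lambda_0+\delta|\sim n$, show that $\psi_n$ is holomorphic on $\{|\delta|\le\tfrac12\}$ with $|\psi_n(\delta)|\lesssim n^{-1/2}$ and $|\psi_n'(\delta)|\lesssim n^{-3/2}$ there.

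Consequently the map $\delta\mapsto\arctanh(-\psi_n(\delta))$ is well defined and holomorphic near $0$, maps a disk $\{|\delta|\le Cn^{-1/2}\}$ into itself, and is a contraction there for all large $n$; by the Banach fixed point theorem it has a unique fixed point $\delta_n$, and $\lambda_n^+:=\lambda_0+\delta_n$ is then a zero of $F$, hence an eigenvalue of $A$ distinct from $0$. (A single application of Rouch\'e's theorem would do just as well.) From $|\delta_n|\le Cn^{-1/2}$ we get $\im\lambda_n^+=(n+\tfrac12)\pi+\im\delta_n\sim n\pi$, and $|\re\lambda_n^+|=|\re\delta_n|\le Cn^{-1/2}\lesssim|\im\lambda_n^+|^{-1/2}$; moreover $\re\lambda_n^+<0$ because $\lambda_n^+\in\sigma(A)\setminus\{0\}=\sigma(A_0)\subset\CC_-$ by Remark~\ref{rem:A1spec}. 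This yields \eqref{eq:sequ} for all large $n$, and after enlarging $C$ we may assume it holds for every $n\ge0$. (In fact $\delta_n=-1/\sqrt{\lambda_0}+O(n^{-1})$, so the exponent $\tfrac12$ is sharp, but only the one-sided bound is needed.)

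Finally, for any $\mu\in\rho(A)$ one has $\|R(\mu,A)\|\ge\dist(\mu,\sigma(A))^{-1}$. Setting $s_n:=\im\lambda_n^+$, which is nonzero for large $n$, we have $is_n\in\rho(A)$ since $\sigma(A)\cap i\RR=\{0\}$, and
\[
\|R(is_n,A)\|\ge\frac{1}{|is_n-\lambda_n^+|}=\frac{1}{|\re\lambda_n^+|}\ge\frac{|s_n|^{1/2}}{C}.
\]
Letting $n\to\infty$ gives $\limsup_{|s|\to\infty}|s|^{-1/2}\|R(is,A)\|\ge C^{-1}>0$. The main obstacle is the middle step: converting the heuristic "$\delta\approx-1/\sqrt{\lambda_0}$" into a genuine zero of $F$ together with the quantitative bound $|\delta_n|\lesssim n^{-1/2}$; the fixed point (or Rouch\'e) argument supplies both, and the only technical care required is to keep $\lambda_0+\delta$ away from the branch cut of $\sqrt{\cdot\,}$ and from the zeros of $\cosh(\sqrt{\lambda})$, which is automatic for $\lambda$ near $i(n+\tfrac12)\pi$.
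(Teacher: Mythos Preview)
Your proof is correct and follows essentially the same route as the paper's: both locate eigenvalues near $\lambda_0=(n+\tfrac12)\pi i$ by rewriting the eigenvalue equation as $\coth(\lambda)=-\tanh(\sqrt{\lambda})/\sqrt{\lambda}$, observe that the right-hand side has size $O(|\lambda|^{-1/2})$, and then pin down a root within $O(n^{-1/2})$ of $\lambda_0$---the paper via Rouch\'e's theorem on disks of radius $2|n+\tfrac12|^{-1/2}$, you via a contraction-mapping argument (while yourself noting Rouch\'e would serve equally well). One small slip: since you already absorbed the minus sign into your definition of $\psi_n$, the fixed-point map should be $\delta\mapsto\arctanh(\psi_n(\delta))$ rather than $\arctanh(-\psi_n(\delta))$; this is harmless for the estimates but should be corrected.
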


\begin{proof}
Consider the meromorphic functions $F$ and $G$ defined  by 
$F(\lambda)=\coth(\lambda)$ and $G(\lambda)=\tanh({\sql})/\sql$,
 where the square root is defined with a branch cut along the negative real axis. By Theorem~\ref{thm:sggeneration}, the roots of the function $F+G$ are eigenvalues of $A$. We use Rouch\'{e}'s theorem to determine the approximate location of such roots; see for instance \cite{DFMP11} and \cite{ZhaZua04} for related arguments. 

The function $F$ is $\pi i$-periodic, with simple poles at $\lambda=n\pi i$ and simple zeros at $\lambda=\lambda_n$, where $\lambda_n=(n+\smash{\frac{1}{2}})\pi i$, $n\in\ZZ$.  Since $F'(\lambda_n)=1$ for all $n\in\ZZ$, a simple argument using a Taylor expansion shows that 
$|F(\lambda)|\ge\smash{\frac{1}{2}}|\gl-\gl_n|$
 provided  $|\gl-\gl_n|$ is sufficiently small. For $n\in\ZZ$, let $r_n=|n+\frac{1}{2}|^{-1/2}$ and $\Omega_{n}=\{\lambda\in\CC:|\lambda-\lambda_n|< 2r_n\}$. 
Then for all sufficiently large $\abs{n}$ we have that $|F(\lambda)|\ge r_n$ for $\lambda\in\partial\Omega_n$ and both $F$ and $G$ are holomorphic in a region containing the closure of $\Omega_n$. 
Simple estimates show that 
$$\sup_{\lambda\in\partial\Omega_n}|G(\lambda)|\sim\frac{r_n}{\sqrt{\pi}},\quad n\to\pm\infty, $$
and hence  $|G(\lambda)|<|F(\lambda)|$ for $\lambda\in\partial\Omega_n$ with $|n|$ sufficiently large. By Rouch\'{e}'s theorem,  $F$ and $F+G$ have the same number of zeros inside $\Omega_n$ for $n\in\ZZ$ as above, namely one. Theorem~\ref{thm:sggeneration} implies that $\sigma(A)\subset\CC_-\cup\{0\}$, so \eqref{eq:sequ} follows. The final statement is a simple consequence of the fact that $\|R(is,A)\|\ge\dist(is,\sigma(A))^{-1}$ for all $s\ne0$.
\end{proof}

\section{Energy decay}
\label{sec:energy}

Theorem~\ref{thm:energy} below gives a quantified estimate for the rate of energy decay of classical solutions to \eqref{eq:problem}. The proof relies on the following abstract result; see \cite[Theorem~2.4]{BT10}.

\begin{thm}
\label{thm:BT}
Let $\Sg$ be a uniformly bounded $C_0$-semigroup on a Hilbert space $X$. Let $A$ be the generator of $\Sg$ and suppose that $\sigma(A)\cap i\RR=\emptyset$. Then, for any constant $\alpha>0$, the following conditions are equivalent:
\begin{itemize}
    \setlength{\itemsep}{.8ex}
  \item[\textup{(i)}] $\norm{R(is,A)}=O(|s|^\alpha)$ as $|s|\to\infty$;
  \item[\textup{(ii)}] $\norm{T(t)A^{-1}}=O(t^{-1/\alpha})$ as $t\to\infty$;
  \item[\textup{(iii)}] $\norm{T(t)x}=o(t^{-1/\alpha})$ as $t\to\infty$ for all $x\in D(A)$.
\end{itemize}
\end{thm}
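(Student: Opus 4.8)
This is the Borichev--Tomilov theorem, quoted verbatim from \cite[Theorem~2.4]{BT10}, so strictly speaking its ``proof'' is that reference; what follows is a sketch of the underlying argument, which one could insert if a self-contained treatment were wanted. Observe first that uniform boundedness of $\Sg$ together with $\sigma(A)\cap i\RR=\emptyset$ forces $0\in\rho(A)$, so $A^{-1}\in\Lin(X)$ and all three conditions make sense. The plan is to prove $\mathrm{(ii)}\Leftrightarrow\mathrm{(iii)}$ by soft functional analysis and $\mathrm{(i)}\Leftrightarrow\mathrm{(ii)}$ by the quantitative Fourier-analytic argument that is the real content of \cite{BT10}.

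For $\mathrm{(iii)}\Rightarrow\mathrm{(ii)}$ I would invoke the uniform boundedness principle: for each $y\in X$ the map $t\mapsto t^{1/\alpha}T(t)A^{-1}y=t^{1/\alpha}T(t)x$, where $x=A^{-1}y\in D(A)$, is continuous on $[1,\infty)$ and tends to $0$ by $\mathrm{(iii)}$, hence is bounded, so $\sup_{t\ge1}\norm{t^{1/\alpha}T(t)A^{-1}}<\infty$. For $\mathrm{(ii)}\Rightarrow\mathrm{(iii)}$ note that $T(t)A^{-2}=(T(t/2)A^{-1})^2$ (the resolvent $A^{-1}$ commuting with the semigroup), so $\mathrm{(ii)}$ gives $\norm{T(t)A^{-2}}\lesssim t^{-2/\alpha}$ and hence $\norm{T(t)x}\lesssim t^{-2/\alpha}=o(t^{-1/\alpha})$ for every $x\in D(A^2)$. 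Since $D(A^2)$ is a core for $A$ and $\norm{T(t)(x-x_n)}\le\norm{T(t)A^{-1}}\,\norm{A(x-x_n)}\lesssim t^{-1/\alpha}\norm{A(x-x_n)}$, approximating $x\in D(A)$ in the graph norm by $x_n\in D(A^2)$ and letting first $t\to\infty$ and then $n\to\infty$ promotes this to $\norm{T(t)x}=o(t^{-1/\alpha})$ for all $x\in D(A)$.

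For $\mathrm{(i)}\Leftrightarrow\mathrm{(ii)}$ the idea is to trade growth of $R(\cdot,A)$ near $i\RR$ for decay of $t\mapsto T(t)A^{-1}$, using that the latter has $R(\cdot,A)A^{-1}$ as its Laplace transform, extended holomorphically across the imaginary axis because $\sigma(A)\cap i\RR=\emptyset$. For $\mathrm{(i)}\Rightarrow\mathrm{(ii)}$ one inverts this transform: from $\mathrm{(i)}$ and the identity $isR(is,A)A^{-1}=R(is,A)+A^{-1}$ one has the pointwise bound $\norm{R(is,A)A^{-1}}\lesssim|s|^{\alpha-1}$, while a Hilbert-space Plancherel / square-function estimate controls the high-frequency part of the inversion integral; splitting that integral at frequency of order $t^{\alpha}$ and optimising produces exactly $\norm{T(t)A^{-1}}\lesssim t^{-1/\alpha}$, and running the circle of implications in reverse yields $\mathrm{(ii)}\Rightarrow\mathrm{(i)}$. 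The Hilbert-space Fourier-analytic step is the crux of \cite{BT10}---it is what makes the theorem genuinely a Hilbert-space result---and is the main obstacle in any self-contained proof; the rest is routine bookkeeping.
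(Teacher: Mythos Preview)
The paper does not prove this theorem at all; it is simply quoted from \cite[Theorem~2.4]{BT10} and then applied. You correctly identify this at the outset, so as far as comparison with the paper goes there is nothing more to say: your ``proof'' and the paper's coincide, namely a citation.

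Your supplementary sketch is sound and worth a brief comment. The soft equivalence $\mathrm{(ii)}\Leftrightarrow\mathrm{(iii)}$ is handled cleanly: the uniform boundedness principle for $\mathrm{(iii)}\Rightarrow\mathrm{(ii)}$ is exactly right, and the squaring trick $T(t)A^{-2}=(T(t/2)A^{-1})^2$ followed by the graph-norm density argument is the standard and correct route for $\mathrm{(ii)}\Rightarrow\mathrm{(iii)}$. For $\mathrm{(i)}\Leftrightarrow\mathrm{(ii)}$ your description is accurate in spirit but necessarily schematic: the frequency-splitting picture you describe is closer to the Batty--Duyckaerts argument \cite{BD08}, which on a general Banach space yields $\mathrm{(ii)}$ only with a logarithmic loss; the point of \cite{BT10} is that on a Hilbert space the Plancherel-type resolvent estimate (the Gomilko/Shi--Feng characterisation of bounded semigroups) removes that loss, and this is indeed where the Hilbert-space hypothesis enters. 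The phrase ``running the circle of implications in reverse'' for $\mathrm{(ii)}\Rightarrow\mathrm{(i)}$ is a little loose --- that direction is usually proved directly via an approximate-eigenvector or Laplace-transform argument rather than by reversing the forward proof --- but this is a minor point in what is explicitly offered as a sketch.
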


The following result is the main result of this section. 

\begin{thm}
\label{thm:energy}
If $x\in D(A)$, then $E_x(t)=o( t^{-4})$ as $t\to\infty$.
\end{thm}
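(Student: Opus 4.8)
The plan is to deduce \refThm{thm:energy} by combining the resolvent estimate from \refThm{thm:upperbd} with the Borichev--Tomilov result \refThm{thm:BT}, applied not to the semigroup $\Sg$ itself (whose generator has $0$ in its spectrum, so \refThm{thm:BT} does not apply directly) but to the restricted semigroup $\Sg[T_0]$ on $X_0=\Ran(A)$ introduced in the proof of \refThm{thm:sggeneration}. First I would recall that by \refThm{thm:sggeneration} and \refprp{prp:Xdecomp} the space splits as $X=X_0\oplus X_1$ with $X_1=\ker(A)$, both subspaces $\Sg$-invariant, that $\Sg[T_0]$ is a contraction semigroup on the Hilbert space $(X_0,\snorm{\cdot})$, and that its generator $A_0$ satisfies $\sigma(A_0)=\sigma(A)\setminus\{0\}\subset\CC_-$ by \refRem{rem:A1spec}; in particular $\sigma(A_0)\cap i\RR=\emptyset$ and $A_0$ is boundedly invertible. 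Moreover, since resolvents of $A$ and $A_0$ agree on $X_0$ and $R(is,A)$ vanishes appropriately on $X_1$ (indeed $R(is,A)$ restricted to $X_1$ is just $(is)^{-1}$ times the identity for $s\neq0$), the bound $\norm{R(is,A)}=O(|s|^{1/2})$ from \refThm{thm:upperbd} immediately gives $\norm{R(is,A_0)}=O(|s|^{1/2})$ as $|s|\to\infty$, where the norm is equivalent to the one induced by $\snorm{\cdot}$ on $X_0$.

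Next I would apply \refThm{thm:BT} to $\Sg[T_0]$ with $\alpha=1/2$: the equivalence of (i) and (iii) yields $\norm{T_0(t)x_0}=o(t^{-2})$ as $t\to\infty$ for every $x_0\in D(A_0)$. To transfer this to the full semigroup, take $x\in D(A)$ and write $x=x_0+x_1$ with $x_0\in X_0$, $x_1\in X_1$; since $x_1\in\ker(A)\subset D(A)$ one has $x_0=x-x_1\in D(A)\cap X_0=D(A_0)$, and $T(t)x=T_0(t)x_0+x_1$ for all $t\ge0$, because $T_1(t)x_1=x_1$. Thus $\norm{T(t)x-x_1}=\snorm{T_0(t)x_0}\cdot(\text{const})=o(t^{-2})$ using the norm equivalence of \refprp{prp:Xdecomp}. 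In other words, every classical solution converges, at rate $o(t^{-2})$, to its projection onto $\ker(A)$.

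Finally I would relate this decay of $\norm{T(t)x-x_1}$ in $X$ to the decay of the physical energy $E_x(t)$. The energy functional $E_x(t)=\tfrac12\big(\norm{u_\xi(\cdot,t)}_{L^2}^2+\norm{u_t(\cdot,t)}_{L^2}^2+\norm{w(\cdot,t)}_{L^2}^2\big)$ is, up to the factor $\tfrac12$, precisely $\tfrac12\snorm{T(t)x}^2$ in the seminorm $\snorm{\cdot}$ of \refprp{prp:Xdecomp}; and since $x_1=(c,0,0)$ for some constant $c$, we have $\snorm{x_1}=0$, so $\snorm{T(t)x}=\snorm{T(t)x-x_1}=\snorm{T_0(t)x_0}$. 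Hence $E_x(t)=\tfrac12\snorm{T_0(t)x_0}^2$, which is equivalent to $\tfrac12\norm{T_0(t)x_0}^2$ up to constants, and therefore $E_x(t)=o(t^{-4})$ as $t\to\infty$, as claimed.

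The main obstacle, or at least the point requiring the most care, is the bookkeeping around the direct-sum decomposition: one must check that the resolvent bound genuinely descends to $A_0$ (handling the $X_1$-component of $R(is,A)$ and the norm equivalence), that $D(A_0)$ picks up exactly the right part of $D(A)$ so that arbitrary classical solutions are covered, and that the seminorm $\snorm{\cdot}$ annihilates $\ker(A)$ so that $E_x(t)$ really does coincide with $\tfrac12\snorm{T_0(t)x_0}^2$ rather than merely being comparable to $\norm{T(t)x}^2$ with a possible non-decaying contribution from $x_1$. Once these identifications are in place the result is an immediate corollary of \refThm{thm:upperbd} and \refThm{thm:BT}.
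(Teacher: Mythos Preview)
Your proposal is correct and follows essentially the same route as the paper: split $X=X_0\oplus X_1$, transfer the resolvent bound of \refThm{thm:upperbd} to $A_0$ via the norm equivalence on $X_0$, apply \refThm{thm:BT} with $\alpha=1/2$ to the contraction semigroup $\Sg[T_0]$, and then identify $E_x(t)=\tfrac12\snorm{T_0(t)x_0}^2$ using that $\snorm{\cdot}$ annihilates $X_1=\ker(A)$. The paper's proof is slightly terser in the resolvent-descent step (it simply chains $\snorm{R(is,A_0)y}\le\norm{R(is,A)y}\lesssim\norm{R(is,A)}\snorm{y}$ rather than discussing the $X_1$-component of $R(is,A)$), but the arguments are otherwise identical.
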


\begin{proof}
Using the notation introduced in the proof of Theorem~\ref{thm:sggeneration}, 
$$E_x(t)=\frac{1}{2}\snorm{T(t)x}^2=\frac{1}{2}\snorm{T_0(t)x_0}^2,\quad t\ge0,$$
where $x=x_0+x_1$ with $x_0\in X_0$ and $x_1\in X_1$. For $y\in X_0$, it follows from the equivalence of the norms $\|\cdot\|$ and $\snorm{\cdot}$ on $X_0$ that 
$$\snorm{R(is,A_0)y}\le\norm{R(is,A)y}\lesssim \norm{R(is,A)}\snorm{y},\quad s\ne0,$$
and hence by Theorem~\ref{thm:upperbd}
$$\|R(is,A_0)\|=O\big(|s|^{1/2}\big),\quad |s|\to\infty,$$ 
where the operator norm is taken with respect to the $\snorm{\cdot}$-norm on $X_0$. 
Since $\sigma(A_0)\cap i\RR=\emptyset$ by Remark~\ref{rem:A1spec}, it follows from Theorem~\ref{thm:BT} that $\snorm{T_0(t)x_0}=o(t^{-2})$ as $t\to\infty$, and the result follows.
\end{proof}

\begin{rem}
\begin{enumerate}[(a)]
\item The rate $t^{-4}$ in Theorem~\ref{thm:energy} is optimal in the sense that, given any positive function $r$ satisfying $r(t)=o(t^{-4})$ as $t\to\infty$, there exists $x\in D(A)$ such that $E_x(t)\ne o(r(t))$ as $t\to\infty$. This follows from Theorem~\ref{thm:opt} and  the uniform boundedness principle together with  \cite[Proposition~1.3]{BD08}; see also \cite[Theorem~4.4.14]{ABHN11}.
\item 
If we denote by $A_0$ the restriction of $A$ to $\ran(A)$ as in the proof of Theorem~\ref{thm:sggeneration},
then Theorem~\ref{thm:opt}  implies that
the spectral bound of $A_0$ satisfies $s(A_0)=0$.
Since $\Sg[T_0]$ is uniformly bounded, its growth bound is equal to $\omega_0(T_0)=0$.
Suppose there exists a positive function $r$ such that $r(t)\to0$ as $t\to\infty$ and for which $E_x(t)=O(r(t))$ as $t\to\infty$ for all $x\in X$. Then by the uniform boundedness principle $\|T_0(t)\|=O(r(t)^{1/2})$ as $t\to\infty$. This implies that $\omega_0(T_0)<0$,  which is a  contradiction.
Hence there is no hope of finding a rate of energy decay which is valid for all $x\in X$; see \cite[Lemma~3.1.7]{vN96} for a related result.
\item The proof of \cite[Theorem~2.4]{BT10} can easily be extended, simply by using the semigroup property, to show that condition (i) in Theorem~\ref{thm:BT} is equivalent to having $\|T(t)x\|=o(\smash{t^{-k/\alpha}})$ as $t\to\infty$ for all $x\in D(A^k)$ and all integers $k\ge1$. Thus the proof of Theorem~\ref{thm:energy} shows that $E_x(t)=o(t^{-4k})$ as $t\to\infty$ for all $x\in D(A^k)$, $k\ge1$, which is to say that smoother orbits have faster energy decay. 
\end{enumerate}
\end{rem}

\section{The case of Dirichlet boundary conditions}
\label{sec:Dirichlet}

As mentioned in Section~\ref{sec:Intro}, the above approach can be modified straightforwardly to deal with the 
model
\eq{
\left\{
\begin{array}{ll@\qquad r}
  \multicolumn{2}{l}{\v_{tt}(\xi,t) =\v_{\xi\xi}(\xi,t),} 
  & \xi\in(-1,0), ~t>0, \\[1ex]
  \multicolumn{2}{l}{\f_{t}(\xi,t) = \f_{\xi\xi}(\xi,t),} &   \xi\in(0,1), ~t>0,\\[1.5ex]
    \v(-1,t)=0, & \f(1,t)=0, & t>0,\\[1ex]
    \v_t(0,t)=\f(0,t), & \v_\xi(0,t)=\f_\xi(0,t), & t>0,\\[1.5ex]
    \v(\xi,0)=\v(\xi), & \v_t(\xi,0)=v(\xi), &  \xi\in(-1,0),
  \\[1ex]
  \multicolumn{2}{l}{\f(\xi,0)=\f(\xi),} & \xi\in(0,1),
\end{array}
\right.
}
where Neumann boundary condition $u_\xi(-1,t)=0$ appearing in \eqref{eq:problem} has been 
replaced by the Dirichlet boundary condition $u(-1,t)=0$ for all $t>0$. Indeed, in this case it is easy to show by means of the Lumer-Phillips theorem that the operator $A$ with the new domain
$$\begin{aligned}
D(A)=\big\{(u,v,w)\in Y: \;& u(-1)=v(-1)=w(1)=0, \\
&\qquad v(0)=w(0), ~u'(0)=w'(0)\big\}
\end{aligned}$$
generates a contraction semigroup on the Hilbert space
$$Z=\big\{(u,v,w)\in X: u(-1)=0\big\}.$$ Moreover, the spectrum of $A$ again consists only of isolated eigenvalues of finite multiplicity, and in fact
$$\sigma(A)=\big\{\lambda\in\CC\backslash\{0\}: \sql \sinh(\lambda)\cosh(\sql)+ \cosh(\lambda)\sinh(\sql)=0\big\}.$$
In particular $\sigma(A)\subset\CC_-$. Arguments completely analogous to those presented in Section~\ref{sec:res} again lead to the optimal resolvent bound 
$\|R(is,A)\|=O(|s|^{1/2})$ as $|s|\to\infty.$
 This time Theorem~\ref{thm:BT} can be applied directly to $\Sg$, and the result 
 $E_x(t)=o(t^{-4})$ as $t\to\infty$
 for all $x\in D(A)$ follows at once from the fact that, by the Poincar\'{e} inequality, the energy defines an equivalent norm on $Z$.

These results are obtained in \cite{ZhaZua04} for a  similar problem involving an alternative coupling condition and the corresponding 
 more restrictive 
choice of  $D(A)$. The technique used in \cite{ZhaZua04} relies on the theory of Riesz spectral operators and requires a very detailed
spectral analysis.
The present approach based on Theorem~\ref{thm:BT}  is rather simpler and in particular requires no particular knowledge about those eigenvalues which do not approach the imaginary axis, nor  any knowledge about any eigenfunctions.

\end{document}